\newtheorem{theorem}{Theorem}[section]
\newtheorem{lemma}[theorem]{Lemma}
\newtheorem{proposition}[theorem]{Proposition}
\theoremstyle{definition}
\theoremstyle{remark}
\newtheorem{remark}[theorem]{Remark}
\numberwithin{equation}{section}
\begin{document}

\title [Matrices forming locally hypercyclic tuples]{On the minimal number of matrices
which form a locally hypercyclic, non-hypercyclic tuple}

\author[G. Costakis]{G. Costakis}
\address{Department of Mathematics, University of Crete, Knossos Avenue, GR-714 09 Heraklion, Crete, Greece}
\email{costakis@math.uoc.gr}
\thanks{}

\author[D. Hadjiloucas]{D. Hadjiloucas}
\address{Department of Computer Science and Engineering, The School of Sciences, European University Cyprus, 6
Diogenes Street, Engomi, P.O.Box 22006, 1516 Nicosia, Cyprus}
\email{d.hadjiloucas@euc.ac.cy}
\thanks{}

\author[A. Manoussos]{A. Manoussos}
\address{Fakult\"{a}t f\"{u}r Mathematik, SFB 701, Universit\"{a}t Bielefeld, Postfach 100131, D-33501 Bielefeld, Germany}
\email{amanouss@math.uni-bielefeld.de}
\thanks{During this research the third author was fully supported by SFB 701 ``Spektrale Strukturen und
Topologische Methoden in der Mathematik" at the University of Bielefeld, Germany. He would also like to express
his gratitude to Professor H. Abels for his support.}

\date{}

\subjclass[2000]{47A16}

\keywords{Hypercyclic operators, locally hypercyclic operators, $J$-class operators, tuples of matrices.}

\begin{abstract}
In this paper we extend the notion of a locally hypercyclic operator to that of a locally hypercyclic tuple of
operators. We then show that the class of hypercyclic tuples of operators forms a proper subclass to that of
locally hypercyclic tuples of operators. What is rather remarkable is that in every finite dimensional vector
space over $\mathbb{R}$ or $\mathbb{C}$, a pair of commuting matrices exists which forms a locally hypercyclic,
non-hypercyclic tuple. This comes in direct contrast to the case of hypercyclic tuples where the minimal number
of matrices required for hypercyclicity is related to the dimension of the vector space. In this direction we
prove that the minimal number of diagonal matrices required to form a hypercyclic tuple on $\mathbb{R}^n$ is
$n+1$, thus complementing a recent result due to Feldman.
\end{abstract}

\maketitle

\section{Introduction}
Locally hypercyclic (or $J$-class) operators form a class of linear operators which possess certain dynamic
properties. These were introduced and studied in \cite{cosma1}. The notion of a locally hypercyclic operator can
be viewed as a ``localization" of the notion of hypercyclic operator. For a comprehensive study and account of
results on hypercyclic operators we refer to the book \cite{BaMa} by Bayart and Matheron.

Hypercyclic tuples of operators were
introduced and studied by Feldman in \cite{Feldman1},
\cite{Feldman2} and \cite{Feldman3}, see also \cite{Kerchy}.
An $n$-tuple of operators is a
finite sequence of length $n$ of commuting continuous linear
operators $T_1, T_2,\ldots,T_n$ acting on a locally convex
topological vector space $X$. The tuple $(T_1,T_2,\ldots,T_n)$ is
hypercyclic if there exists a vector $x\in X$ such that the set
\[\left\{T_1^{k_1}T_2^{k_2}\ldots T_n^{k_n}x :
k_1, k_2,\ldots, k_n \in\mathbb{N}\cup\{0\}\right\}\] is dense in $X$. The tuple $(T_1,T_2,\ldots,T_n)$ is topologically transitive if for every pair $(U,V)$ of
non-empty open sets in $X$ there exist $k_1, k_2, \ldots ,k_n$ $\in\mathbb{N}\cup\{0\}$ such that $T_1^{k_1}T_2^{k_2}\ldots T_n^{k_n}(U)\cap V\neq \emptyset$. If $X$
is separable it is easy to show that $(T_1,T_2,\ldots,T_n)$ is topologically transitive if and only if $(T_1,T_2,\ldots,T_n)$ is hypercyclic. Following Feldman
\cite{Feldman3}, we denote the semigroup generated by the tuple $T=(T_1,T_2,\ldots, T_n)$ by $\mathcal{F}_T=\{T_1^{k_1}T_2^{k_2}\ldots T_n^{k_n}: k_i
\in\mathbb{N}\cup\{0\}\}$ and the orbit of $x$ under the tuple $T$ by $Orb(T,x)=\{Sx:S\in\mathcal{F_T}\}$. Furthermore, we denote by $HC((T_1, T_2,\ldots,T_n))$ the
set of hypercyclic vectors for the tuple $(T_1,T_2,\ldots,T_n)$.

In this article we extend the notion of a locally hypercyclic operator (locally topologically transitive) to
that of a locally hypercyclic tuple (locally topologically transitive tuple) of operators as follows. For $x\in
X$ we define the extended limit set $J_{(T_1,T_2,\ldots,T_n)}(x)$ to be the set of $y\in X$ for which there
exist a sequence of vectors $\{x_m\}$ with $x_m\to x$ and sequences of non-negative integers
$\{k_m^{(j)}:m\in\mathbb{N}\}$ for $j=1,2,\ldots,n$ with
\begin{eqnarray}\label{unbseq}
k_m^{(1)}+k_m^{(2)}+\ldots+k_m^{(n)}&\to& +\infty
\end{eqnarray}
such that
\[T_1^{k_m^{(1)}}T_2^{k_m^{(2)}}\ldots T_n^{k_m^{(n)}} x_m\to y.\]
Note that condition (\ref{unbseq}) is equivalent to having at least one of the sequences $\{k_m^{(j)}:m\in\mathbb{N}\}$ for $j=1,2,\ldots,n$ containing a strictly
increasing subsequence tending to $+\infty$. This is in accordance with the well-known definition of $J$-sets in topological dynamics, see \cite{Hajek}. In section
$2$ we provide an explanation as to why condition (\ref{unbseq}) is reasonable. The tuple $(T_1,T_2,\ldots,T_n)$ is \textit{locally topologically transitive} if there
exists $x\in X\setminus\{0\}$ such that $J_{(T_1,T_2,\ldots,T_n)}(x)=X$. Using simple arguments it is easy to show the following equivalence.
$J_{(T_1,T_2,\ldots,T_n)}(x)=X$ if and only if for every open neighborhood $U_x$ of $x$ and every non-empty open set $V$ there exist $k_1, k_2, \ldots
,k_n\in\mathbb{N}\cup\{0\}$ such that $T_1^{k_1}T_2^{k_2}\ldots T_n^{k_n}(U_x)\cap V\neq \emptyset$. In the case $X$ is separable and there exists $x\in
X\setminus\{0\}$ such that $J_{(T_1,T_2,\ldots,T_n)}(x)=X$, the tuple $(T_1,T_2,\ldots,T_n)$ will be called
\textit{locally hypercyclic}.

In a finite dimensional space over $\mathbb{R}$ or $\mathbb{C}$, no linear operator can be hypercyclic (see
\cite {Kitai}) or locally hypercyclic (see \cite{cosma1}). However, it was shown recently by Feldman in
\cite{Feldman3} that the situation for tuples of linear operators in finite dimensional spaces over $\mathbb{R}$
or $\mathbb{C}$ is quite different. There, it was shown that there exist hypercyclic $(n+1)$-tuples of diagonal
matrices on $\mathbb{C}^n$ and that no $n$-tuple of diagonal matrices is hypercyclic. We complement this result
by showing that the minimal number of diagonal matrices required to form a hypercyclic tuple in $\mathbb{R}^n$
is $n+1$. We also mention at this point that in \cite{cohama1} it is proved that non-diagonal hypercyclic
$n$-tuples exist on $\mathbb{R}^n$, answering a question of Feldman.

In the present work we make a first attempt towards studying locally hypercyclic tuples of linear operators on
finite dimensional vector spaces over $\mathbb{R}$ or $\mathbb{C}$. We show that if a tuple of linear operators
is hypercyclic then it is locally hypercyclic (see section $2$). We then proceed to show that in the finite
dimensional setting, the class of hypercyclic tuples of operators forms a proper subclass of the class of
locally hypercyclic tuples of operators. What is rather surprising is the fact that the minimal number of
matrices required to construct a locally hypercyclic tuple in any finite dimensional space over $\mathbb{R}$ or
$\mathbb{C}$ is $2$. This comes in direct contrast to the class of hypercyclic tuples where the minimal number
of matrices required depends on the dimension of the vector space. Examples of diagonal pairs of matrices as
well as pairs of upper-triangular non-diagonal matrices and matrices in Jordan form which are locally
hypercyclic but not hypercyclic are constructed. We mention that some of our constructions can be directly
generalized to the infinite dimensional case, see section $4$.

\section{Basic properties of locally hypercyclic tuples of operators}
Let us first comment on the condition (\ref{unbseq}) in the definition of a locally hypercyclic tuple. This
comes as an extension to the definition of a locally hypercyclic operator given in \cite{cosma1}. Recall that a
hypercyclic operator $T:X\to X$ is locally hypercyclic and furthermore $J_T(x)=X$ for every $x\in X$. In the
definition of a locally hypercyclic tuple, one may have been inclined to demand that $k_m^{(j)}\to +\infty$ for
every $j=1,2,\ldots,n$. However this would lead to a situation where the class of hypercyclic tuples would not
form a subclass of the locally hypercyclic tuples. To clarify this issue, we give an example. Take any
hypercyclic operator $T:X\to X$ and consider the tuple $(T,0)$ where $0:X\to X$ is the zero operator defined by
$0(x)=0$ for every $x\in X$. Obviously, this is a hypercyclic tuple ($Orb(0,x)=\{x,0\}$). On the other hand, for
every pair of sequences of integers $\{n_k\}$, $\{m_k\}$ with $n_k, m_k\to +\infty$ and for every sequence of
vectors $x_k$ tending to some vector $x$ we have $T^{n_k}0^{m_k}x_k\to 0$ and so $(T,0)$ would not be a locally
hypercyclic pair.

Let us now proceed by stating some basic facts which will be used in showing that the class of hypercyclic
tuples is contained in the class of locally hypercyclic tuples.

\begin{lemma}\label{basiclemma1}
If $x\in HC((T_1,T_2,\ldots,T_n))$ then
$J_{(T_1,T_2,\ldots,T_n)}(x)=X$.
\end{lemma}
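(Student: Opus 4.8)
The plan is to fix an arbitrary $y\in X$ and produce the data required by the definition of $J_{(T_1,T_2,\ldots,T_n)}(x)$. Since $x\in HC((T_1,T_2,\ldots,T_n))$, the orbit $Orb(T,x)$ is dense in $X$, so there exist operators $S_m=T_1^{k_m^{(1)}}T_2^{k_m^{(2)}}\cdots T_n^{k_m^{(n)}}\in\mathcal{F}_T$ with $S_m x\to y$. If the sums $k_m^{(1)}+k_m^{(2)}+\cdots+k_m^{(n)}$ are unbounded, I would pass to a subsequence along which they tend to $+\infty$, take the constant sequence $x_m:=x$, and be done: condition (\ref{unbseq}) holds and $S_m x_m\to y$.

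It remains to deal with the case in which these sums stay bounded, say by $C$. There are only finitely many tuples $(j_1,\ldots,j_n)$ of non-negative integers with $j_1+\cdots+j_n\le C$, so after passing to a subsequence the exponents of $S_m$ are constant; hence $S_m$ equals a fixed $S=T_1^{a_1}\cdots T_n^{a_n}\in\mathcal{F}_T$ for all $m$, and therefore $y=Sx\in Orb(T,x)$. To handle this, I claim that $x$ is itself a limit of ``high order'' points of its own orbit: for every $N\in\mathbb{N}$ there is $R_N=T_1^{b_1^{(N)}}\cdots T_n^{b_n^{(N)}}\in\mathcal{F}_T$ with $b_1^{(N)}+\cdots+b_n^{(N)}\ge N$ and $R_N x\to x$ as $N\to\infty$. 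Granting the claim, continuity of $S$ gives $S R_N x\to Sx=y$; since $T_1,\ldots,T_n$ commute, $S R_N=T_1^{a_1+b_1^{(N)}}\cdots T_n^{a_n+b_n^{(N)}}$, whose exponents sum to at least $N$, so taking once more the constant sequence $x_m:=x$ places $y$ in $J_{(T_1,T_2,\ldots,T_n)}(x)$.

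For the claim, set $O_N=\{Sx:S\in\mathcal{F}_T,\ \text{the exponents of }S\text{ sum to }\ge N\}$. The set $\{S\in\mathcal{F}_T:\text{the exponents of }S\text{ sum to }<N\}$ is finite, so $Orb(T,x)\setminus O_N$ is finite. Since $Orb(T,x)$ is dense and, when $\dim X\ge 1$, every nonempty open subset of a topological vector space is infinite (hence no finite set has nonempty interior, so deleting finitely many points from a dense set leaves it dense), $O_N$ is still dense; in particular $x\in\overline{O_N}$. Choosing $R_N\in\mathcal{F}_T$ whose exponents sum to at least $N$ and with $R_N x$ in the $N$-th member of a neighbourhood basis at $x$ (in the finite dimensional setting of the paper, within distance $1/N$ of $x$) yields $R_N x\to x$, proving the claim; the case $X=\{0\}$ is trivial.

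The only genuinely substantive point is this last observation — that discarding the finitely many ``low order'' orbit points does not destroy density — together with the pigeonhole reduction to the case $y=Sx$; once $y$ is known to lie in the orbit, commutativity of the tuple and continuity of the fixed operator $S$ finish the argument. I do not expect any serious obstacle beyond organising this dichotomy cleanly.
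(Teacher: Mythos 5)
Your proof is correct, and its one substantive ingredient is exactly the one the paper uses: the operators in $\mathcal{F}_T$ whose exponent sum is below a given bound form a finite set, so deleting the corresponding orbit points from the dense orbit $Orb(T,x)$ leaves a dense set (since no nonempty open subset of a nontrivial topological vector space is finite). The difference is organisational: the paper applies this observation \emph{directly to $y$} --- for every $m$ and $\epsilon>0$ the ``high order'' part $\{T_1^{k_1}\cdots T_n^{k_n}x : k_1+\cdots+k_n>m\}$ of the orbit is still dense, so one immediately finds exponents with large sum and $\|T_1^{k_1}\cdots T_n^{k_n}x-y\|<\epsilon$, and the constant sequence $x_m=x$ finishes the proof in three lines. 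You instead run a dichotomy on whether the exponent sums of the approximating operators are bounded, and in the bounded case you reduce to $y=Sx$, approximate $x$ by high-order orbit points $R_Nx$, and push forward by $S$ using commutativity and continuity. That detour is sound, but it is rendered unnecessary by applying your own density claim to $y$ rather than to $x$: once you know the high-order tail of the orbit is dense, both of your cases collapse into the paper's single direct argument. So: correct, same key fact, but a longer route; the only thing the extra structure buys is the (true but not needed here) remark that $y$ lying on the orbit can be handled through $y=Sx$ and commutativity.
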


\begin{proof}
Let $y\in X$, $\epsilon>0$ and $m\in\mathbb{N}$. Since the set
$Orb(T,x)$ is dense in $X$ it follows that the set
\[\{T_1^{k_1}T_2^{k_2}\ldots T_n^{k_n} x: k_1+k_2+\ldots+k_n>m\}\]
is dense in $X$ (only a finite number of vectors are omitted from the orbit $Orb(T,x)$). Hence, there
exist $(k_1,k_2,\ldots,k_n)\in\mathbb{N}^n$ with
$k_1+k_2+\ldots+k_n>m$ such that
\[\|T_1^{k_1}T_2^{k_2}\ldots T_n^{k_n} x - y\|<\epsilon.\]
\end{proof}
The proof of the following lemma is an immediate variation of the
proof of lemma 2.5 in \cite{cosma2}.
\begin{lemma}\label{basiclemma2}
If $\{x_m\}$, $\{y_m\}$ are two sequences in $X$ such that $x_m\to
x$ and $y_n\to y$ for some $x,y\in X$ and $y_m\in
J_{(T_1,T_2,\ldots,T_n)}(x_m)$ for every $m\in\mathbb{N}$ then $y\in
J_{(T_1,T_2,\ldots,T_n)}(x)$.
\end{lemma}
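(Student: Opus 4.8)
The plan is to prove Lemma \ref{basiclemma2} by a diagonalization argument, exactly mirroring the classical proof that $J$-sets behave well under limits (Lemma 2.5 in \cite{cosma2}), but carried out in the tuple setting with the sum condition (\ref{unbseq}) in place of the single-exponent condition. Fix $y \in X$ and, since we work towards showing $y \in J_{(T_1,\ldots,T_n)}(x)$, it suffices to produce, for each $m$, a vector $z_m$ with $\|z_m - x\| < 1/m$ and non-negative integers $k^{(1)}, \ldots, k^{(n)}$ with $k^{(1)} + \cdots + k^{(n)} > m$ (say) such that $\|T_1^{k^{(1)}} \cdots T_n^{k^{(n)}} z_m - y\| < 1/m$; this immediately yields the approximating sequences required by the definition.

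First I would fix $m$ and choose $M$ large enough that $\|x_M - x\| < 1/(2m)$ and $\|y_M - y\| < 1/(2m)$, which is possible since $x_M \to x$ and $y_M \to y$. Next, using the hypothesis $y_M \in J_{(T_1,\ldots,T_n)}(x_M)$, I invoke the definition: there is a sequence $\{w_j\}$ with $w_j \to x_M$ and non-negative integer sequences $\{k_j^{(1)}\}, \ldots, \{k_j^{(n)}\}$ with $k_j^{(1)} + \cdots + k_j^{(n)} \to +\infty$ as $j \to \infty$ such that $T_1^{k_j^{(1)}} \cdots T_n^{k_j^{(n)}} w_j \to y_M$. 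For $j$ sufficiently large (depending on $m$ and $M$) we have simultaneously $\|w_j - x_M\| < 1/(2m)$, $\|T_1^{k_j^{(1)}} \cdots T_n^{k_j^{(n)}} w_j - y_M\| < 1/(2m)$, and $k_j^{(1)} + \cdots + k_j^{(n)} > m$; pick one such $j$ and set $z_m := w_j$ together with the corresponding exponents. The triangle inequality then gives $\|z_m - x\| \le \|w_j - x_M\| + \|x_M - x\| < 1/m$ and likewise $\|T_1^{k^{(1)}} \cdots T_n^{k^{(n)}} z_m - y\| < 1/m$, while the sum of exponents exceeds $m$.

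Finally, letting $m$ range over $\mathbb{N}$, the vectors $z_m \to x$, the quantities $T_1^{k^{(1)}_m} \cdots T_n^{k^{(n)}_m} z_m \to y$, and since the sums of exponents exceed $m$ they tend to $+\infty$; hence at least one of the exponent sequences contains a subsequence tending to $+\infty$, so condition (\ref{unbseq}) holds and $y \in J_{(T_1,\ldots,T_n)}(x)$. The only point requiring any care — and the closest thing to an obstacle — is bookkeeping: one must choose $M$ first, then choose $j$ depending on that $M$, and make sure the divergence of the exponent sums (which is a statement about $j \to \infty$ for fixed $M$) is actually used to force the sum above the threshold $m$. Since all the relevant convergences are genuine limits, no uniformity across $M$ is needed, and the argument goes through verbatim as in \cite{cosma2}; this is why the authors can simply assert it is "an immediate variation."
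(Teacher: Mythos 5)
Your diagonalization argument is correct, and it is precisely the ``immediate variation of the proof of lemma 2.5 in \cite{cosma2}'' that the paper invokes without writing out: approximate $x$ and $y$ by $x_M$ and $y_M$, then use the definition of $J_{(T_1,\ldots,T_n)}(x_M)$ to pick a vector and exponents with large exponent sum, and conclude by the triangle inequality. The bookkeeping you describe (choose $M$ first, then $j$ depending on $M$, with the sum condition (\ref{unbseq}) secured by taking $j$ large) is exactly the intended argument, so there is nothing to add.
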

\begin{lemma}\label{basiclemma3}
For all $x\in X$ the set $J_{(T_1,T_2,\ldots,T_n)}(x)$ is closed and
$T_j$ invariant for every $j=1,2,\ldots,n$.
\end{lemma}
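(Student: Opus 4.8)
The plan is to prove the two assertions of Lemma~\ref{basiclemma3} separately, both by direct appeal to the definition of the extended limit set together with continuity of the operators. Fix $x\in X$.

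\medskip

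\noindent\textbf{Closedness.} I would take a sequence $\{y_m\}$ in $J_{(T_1,T_2,\ldots,T_n)}(x)$ with $y_m\to y$, and aim to show $y\in J_{(T_1,T_2,\ldots,T_n)}(x)$. This is almost immediate from Lemma~\ref{basiclemma2}: simply set $x_m=x$ for every $m$, so that the constant sequence $\{x_m\}$ converges to $x$, while $y_m\in J_{(T_1,T_2,\ldots,T_n)}(x_m)$ by hypothesis and $y_m\to y$; Lemma~\ref{basiclemma2} then gives $y\in J_{(T_1,T_2,\ldots,T_n)}(x)$ at once. So the closedness half costs essentially nothing once Lemma~\ref{basiclemma2} is in hand.

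\medskip

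\noindent\textbf{$T_j$-invariance.} Here I want to show that if $y\in J_{(T_1,T_2,\ldots,T_n)}(x)$ then $T_j y\in J_{(T_1,T_2,\ldots,T_n)}(x)$ for each fixed $j$. By definition of $y\in J_{(T_1,T_2,\ldots,T_n)}(x)$ there are vectors $x_m\to x$ and non-negative integers $k_m^{(1)},\ldots,k_m^{(n)}$ with $k_m^{(1)}+\cdots+k_m^{(n)}\to+\infty$ and $T_1^{k_m^{(1)}}\cdots T_n^{k_m^{(n)}}x_m\to y$. Applying the continuous operator $T_j$ to both sides and using that the $T_i$ commute, I get
\[
T_1^{k_m^{(1)}}\cdots T_j^{k_m^{(j)}+1}\cdots T_n^{k_m^{(n)}}x_m \;=\; T_j\bigl(T_1^{k_m^{(1)}}\cdots T_n^{k_m^{(n)}}x_m\bigr)\;\longrightarrow\; T_j y .
\]
The new exponent tuple is $(k_m^{(1)},\ldots,k_m^{(j)}+1,\ldots,k_m^{(n)})$, whose sum is $k_m^{(1)}+\cdots+k_m^{(n)}+1\to+\infty$, so condition (\ref{unbseq}) still holds, and the same approximating sequence $x_m\to x$ works. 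Hence $T_j y\in J_{(T_1,T_2,\ldots,T_n)}(x)$.

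\medskip

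I do not anticipate a genuine obstacle here; the only point demanding a little care is the use of commutativity, which is exactly what lets one absorb the extra factor $T_j$ into the $j$-th slot of the product and thereby stay within the family $\mathcal{F}_T$. (Without commutativity one would instead land on $T_jT_1^{k_m^{(1)}}\cdots T_n^{k_m^{(n)}}$, which need not be of the required form.) Continuity of $T_j$ is used to pass the limit through, and the strict monotonicity reformulation of (\ref{unbseq}) noted in the text makes it transparent that adding $1$ to one exponent preserves the divergence condition.
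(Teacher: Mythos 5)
Your proof is correct and is essentially the argument the paper has in mind: the paper dismisses the lemma as ``an easy consequence of Lemma \ref{basiclemma2},'' and your closedness argument (constant sequence $x_m=x$ in Lemma \ref{basiclemma2}) together with the invariance argument (apply the continuous $T_j$, use commutativity to absorb it into the $j$-th exponent, note the exponent sum still tends to $+\infty$) is exactly the standard way to fill in that one-line proof.
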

\begin{proof}
This is an easy consequence of Lemma \ref{basiclemma2}.
\end{proof}
\begin{proposition}
$(T_1,T_2,\ldots,T_n)$ is hypercyclic if and only if it is locally hypercyclic and
$J_{(T_1,T_2,\ldots,T_n)}(x)=X$ for every $x\in X$.
\end{proposition}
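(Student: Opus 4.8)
The plan is to prove the two implications separately, using the basic lemmas established above. The forward direction is almost immediate: if $(T_1,T_2,\ldots,T_n)$ is hypercyclic, then it has a dense orbit, so in particular the underlying space $X$ is separable; picking any hypercyclic vector $x_0$, Lemma \ref{basiclemma1} gives $J_{(T_1,\ldots,T_n)}(x_0)=X$, and since $x_0\neq 0$ (the orbit of $0$ is $\{0\}$, which is not dense unless $X=\{0\}$, a trivial case we may exclude), the tuple is locally hypercyclic. It remains to check that $J_{(T_1,\ldots,T_n)}(x)=X$ for \emph{every} $x\in X$, not just for the hypercyclic vectors; but the set of hypercyclic vectors is dense in $X$, so given an arbitrary $x$ we may pick hypercyclic vectors $x_m\to x$, and since $J_{(T_1,\ldots,T_n)}(x_m)=X\ni y$ for any fixed $y$ and any $m$, Lemma \ref{basiclemma2} (applied with the constant sequence $y_m=y$) yields $y\in J_{(T_1,\ldots,T_n)}(x)$. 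Hence $J_{(T_1,\ldots,T_n)}(x)=X$ for all $x$.

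For the converse, suppose $(T_1,\ldots,T_n)$ is locally hypercyclic (so $X$ is separable) and $J_{(T_1,\ldots,T_n)}(x)=X$ for every $x\in X$. The plan is to run a Baire category argument, exactly as in the classical Birkhoff transitivity theorem. Fix a countable base $\{V_\ell\}_{\ell\in\mathbb{N}}$ for the topology of $X$. For each $\ell$ set
\[
G_\ell=\bigcup_{S\in\mathcal{F}_T} S^{-1}(V_\ell)=\{z\in X : Sz\in V_\ell \text{ for some } S\in\mathcal{F}_T\},
\]
which is open since each $S$ is continuous. The key point is that each $G_\ell$ is dense: given any nonempty open $U$ and any $z\in U$, the hypothesis $J_{(T_1,\ldots,T_n)}(z)=X$ together with the equivalence recorded in the introduction (namely $J_{(T_1,\ldots,T_n)}(z)=X$ iff for every neighborhood $U_z$ of $z$ and every nonempty open $W$ there is $S\in\mathcal{F}_T$ with $S(U_z)\cap W\neq\emptyset$) gives some $S\in\mathcal{F}_T$ with $S(U)\cap V_\ell\neq\emptyset$, i.e. $U\cap G_\ell\neq\emptyset$. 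If $X$ is a Baire space (it is, being either a finite-dimensional space or — in the intended applications — a completely metrizable separable space; in the general locally convex setting one restricts to that case), then $\bigcap_\ell G_\ell$ is a dense $G_\delta$, hence nonempty, and any point in it has orbit meeting every $V_\ell$, so it is a hypercyclic vector.

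The one point requiring care — and the place I expect the only real friction — is the interplay between the "$J$-set" formulation, which quantifies over neighborhoods $U_z$ of a point $z$, and the "transitivity" formulation needed to feed the Baire argument, which quantifies over arbitrary nonempty open sets $U$. This is precisely the equivalence stated (without proof) in the introduction, so I would either invoke it directly or include the short argument: the forward direction of that equivalence is immediate by taking $U_z\subseteq U$, and the backward direction follows by a diagonal construction producing the sequence $x_m\to z$ and exponents with $k_m^{(1)}+\cdots+k_m^{(n)}\to\infty$ — here one uses that infinitely many distinct $S\in\mathcal{F}_T$ must occur (if only finitely many worked, their total exponents would be bounded and one could shrink $U_z$ to kill all but the contributions of bounded total degree, contradicting $J(z)=X$ whenever $X$ is infinite, and handling the finite-dimensional case by the same shrinking argument since $\mathcal{F}_T$ restricted to bounded total degree is a finite set of maps). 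Granting that equivalence, the proof is just the two Baire-category steps above.
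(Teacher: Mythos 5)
Your proof is correct, and the forward direction is essentially the paper's argument: the paper sets $A=\{x\in X: J_{(T_1,\ldots,T_n)}(x)=X\}$, notes $HC((T_1,\ldots,T_n))\subset A$ by Lemma \ref{basiclemma1}, and concludes $A=X$ from the density of the hypercyclic vectors (cited from Feldman) together with the closedness of $A$ coming from Lemma \ref{basiclemma2}; your application of Lemma \ref{basiclemma2} with the constant sequence $y_m=y$ is the same argument phrased pointwise. The difference is in the converse. The paper merely observes that $J_{(T_1,\ldots,T_n)}(x)=X$ for all $x$ yields topological transitivity (via the easy direction of the equivalence recorded in the introduction) and then invokes the introduction's statement that, for separable $X$, transitivity is equivalent to hypercyclicity. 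You instead unwind that last step and reprove it by the Birkhoff--Baire category argument with the open sets $G_\ell$; this is a correct and more self-contained route, at the price of explicitly assuming that $X$ is a Baire space with a countable base --- automatic in the finite-dimensional setting the paper is concerned with, and in fact implicitly needed for the paper's own ``easy to show'' equivalence, so your caveat is if anything more careful than the source. One remark: your closing paragraph worries about the ``hard'' direction of the $J$-set/transitivity equivalence (manufacturing exponents with $k_m^{(1)}+\cdots+k_m^{(n)}\to\infty$), but that direction is never used in your proof --- for the density of each $G_\ell$ you only need the easy implication from $J_{(T_1,\ldots,T_n)}(z)=X$ to the transitivity-type condition at $z$ --- so that discussion (whose sketch is in any case rather loose) can simply be deleted without affecting the argument.
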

\begin{proof}
Assume first that $(T_1,T_2,\ldots,T_n)$ is hypercyclic. By Lemma \ref{basiclemma1} it follows that
$(T_1,T_2,\ldots,T_n)$ is locally hypercyclic. Denote by $A$ the set of vectors $\{x\in X:
J_{(T_1,T_2,\cdots,T_n)}(x)=X\}$. By Lemma \ref{basiclemma1} we have $HC((T_1,T_2,\ldots,T_n))\subset A$. Since
$HC((T_1,T_2,\ldots,T_n))$ is dense (see \cite{Feldman3}) and $A$ is closed by Lemma \ref{basiclemma2}, it is
plain that $A=X$. For the converse implication let us consider $x\in X$. Since $J_{(T_1,T_2,\ldots,T_n)}(x)=X$
then for every open neighborhood $U_x$ of $x$ and every non-empty open set $V$ there exist $k_1, k_2, \ldots
k_n\in\mathbb{N}\cup\{0\}$ such that $T_1^{k_1}T_2^{k_2}\ldots T_n^{k_n}(U_x)\cap V\neq \emptyset$. Therefore
$(T_1,T_2,\ldots,T_n)$ is topologically transitive and since $X$ is separable it follows that
$(T_1,T_2,\ldots,T_n)$ is hypercyclic.
\end{proof}

\section{Locally hypercyclic pairs of diagonal matrices which are not hypercyclic}
In \cite{Feldman3}, Feldman showed that there exist $(n+1)$-tuples
of diagonal matrices on $\mathbb{C}^n$ and that there are no
hypercyclic $n$-tuples of diagonalizable matrices on $\mathbb{C}^n$.
In the same paper, Feldman went a step further to show that no $n$-tuple
of diagonal matrices on $\mathbb{R}^n$ is hypercyclic while, on the other
hand, there exists an $(n+1)$-tuple of diagonal matrices on $\mathbb{R}^n$
that has a dense orbit in $(\mathbb{R}^+)^n$.
We complement the last result by showing that there is an $(n+1)$-tuple
of diagonal matrices on $\mathbb{R}^n$ which is hypercyclic.
Throughout the rest of the paper for a vector $u$ in $\mathbb{R}^n$ or $\mathbb{C}^n$
we will be denoting by
$u^t$ the transpose of $u$.
\begin{theorem}
For every $n\in\mathbb{N}$ there exists an $(n+1)$-tuple of diagonal matrices
on $\mathbb{R}^n$ which is hypercyclic.
\end{theorem}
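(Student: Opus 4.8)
The plan is to construct the $(n+1)$-tuple explicitly, building on Feldman's $(n+1)$-tuple of positive diagonal matrices whose orbit is dense in $(\mathbb{R}^+)^n$, and then adjoining sign-changing information so that the orbit becomes dense in all of $\mathbb{R}^n$. Concretely, I would first record Feldman's construction: there exist positive reals giving diagonal matrices $D_0,D_1,\ldots,D_n$ on $\mathbb{R}^n$ and a vector $x$ with positive entries such that $\{D_0^{k_0}D_1^{k_1}\cdots D_n^{k_n}x : k_i \ge 0\}$ is dense in $(\mathbb{R}^+)^n$. The idea is that after applying such a product, the resulting vector lies in the open positive orthant and can be made arbitrarily close to any prescribed positive vector. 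The remaining obstacle is purely one of signs: we must be able to reach every one of the $2^n$ orthants.

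The key step is to handle sign changes using one extra factor of exponent that is already available, or by a mild modification of the tuple. I would take $n+1$ matrices, but arrange the exponents so that, say, $D_0$ is replaced by a matrix of the form $\lambda R$ where $\lambda>1$ is a scalar close to $1$ and $R$ is a diagonal matrix with entries $\pm 1$ chosen so that the multiplicative group generated by the sign patterns of $D_0,\ldots,D_n$ (equivalently, the $\mathbb{F}_2$-span of their sign vectors in $\{0,1\}^n$) is all of $\mathbb{F}_2^n$. Then for any target orthant $\epsilon \in \{\pm 1\}^n$ and any target magnitude vector, I would first use Feldman's density to get the magnitudes right within the positive orthant (this requires the positive parts of the $D_i$ to still realize Feldman's construction, which holds since multiplying by a sign matrix does not change absolute values of entries), and then adjust the parities of the exponents $k_0,\ldots,k_n$ — changing each $k_i$ by at most $1$ — to land in orthant $\epsilon$. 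Changing an exponent by $1$ perturbs the magnitude vector only by a bounded factor, so a standard $\epsilon/2$ argument (first overshoot the precision, then correct the sign, then note the magnitude error is still small) closes the approximation.

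I would organize the proof as follows. First, fix an arbitrary target vector $y \in \mathbb{R}^n$ with all coordinates nonzero (these are dense, so it suffices) and $\delta>0$; write $\epsilon = \mathrm{sign}(y)$ and $|y|$ for the coordinatewise absolute value. Second, invoke Feldman's result to obtain exponents $(k_0,\ldots,k_n)$, as large as we like and of any prescribed parities, with $D_0^{k_0}\cdots D_n^{k_n}x$ close to $|y|$ — here I would need to check that Feldman's density argument is robust under prescribing the residues of the $k_i$ mod $2$, which follows because the relevant ratios of logarithms are irrational so the lattice of attainable exponent vectors still projects densely after restricting to a coset. Third, choose the parities of the $k_i$ so that $\prod_i R_i^{k_i} = \mathrm{diag}(\epsilon)$, which is possible precisely because the sign vectors span $\mathbb{F}_2^n$. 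Fourth, combine: $D_0^{k_0}\cdots D_n^{k_n}x$ has sign pattern $\epsilon$ and magnitude close to $|y|$, hence is within $\delta$ of $y$. Finally, conclude that the orbit of $x$ is dense in $\mathbb{R}^n$, so the tuple is hypercyclic.

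The main obstacle I anticipate is the second step: ensuring that Feldman's density conclusion survives when we are only allowed exponent tuples in a fixed coset of $(2\mathbb{Z})^{n+1}$. This is a question about simultaneous Diophantine approximation — one needs the image of that coset under the map $(k_0,\ldots,k_n)\mapsto \sum k_i \log \lambda_i^{(j)}$ (for the $j$-th coordinate) to remain dense in $\mathbb{R}^n$. If Feldman's original parameters already have this robustness (which is typical, since density of such additive images is governed by rational independence that is unaffected by passing to a finite-index sublattice), the argument goes through verbatim; otherwise one perturbs the $\lambda_i^{(j)}$ slightly to arrange it, using that the set of bad parameter tuples is meager. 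Once this point is secured, the sign bookkeeping is elementary linear algebra over $\mathbb{F}_2$ and the rest is routine.
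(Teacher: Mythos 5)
Your proposal is correct in substance and follows essentially the same route as the paper: density in the positive orthant via Kronecker-type simultaneous approximation, plus sign control through the parity of one exponent per coordinate. The difference is only in implementation. The paper sidesteps your main worry --- whether positive-orthant density survives prescribing the exponents modulo $2$ --- by placing the entry $-\sqrt{e}$ (rather than $-1$ times a positive entry) in the $j$-th slot of $B_j$: even powers of $-\sqrt{e}$ are exactly the powers of $e$, so the even-exponent sub-semigroup reproduces the positive construction verbatim, while odd exponents merely multiply by the fixed constant $-\sqrt{e}$, i.e.\ translate by $1/2$ in logarithmic coordinates, which Kronecker density tolerates; this is the ``easy argument'' the paper cites from lemma 2.6 of \cite{cohama1}. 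Your coset-restriction concern is in any case settled without perturbing parameters: fixing the parities replaces the attainable log-vectors by twice the original semigroup plus a constant vector, and a dense set scaled by $2$ and translated is still dense, so the meager-set fallback is unnecessary. One caveat: the remark that one can first approximate the magnitudes and then ``correct the sign by changing each exponent by at most $1$, since this perturbs the magnitude only by a bounded factor'' is not a valid argument --- multiplying coordinates by fixed factors such as $e$ or $e^{a_j}$ produces an error proportional to the target, not to the initial precision --- so you should rely on the prescribe-parities-first organization of your four-step outline (solve the sign equation over $\mathbb{F}_2$ first, then approximate the magnitudes within that parity coset), which is sound.
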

\begin{proof}
Choose negative real numbers $a_1, a_2,\ldots,a_n$ such that the numbers
\[1, a_1, a_2,\ldots, a_n\]
are linearly independent over $\mathbb{Q}$. By Kronecker's theorem (see theorem 442 in \cite{HardyWright}) the
set
\[\{(k a_1 +s_1, k a_2+s_2,\ldots, k a_n+s_n)^t: k, s_1,\ldots, s_n\in\mathbb{N}\cup\{0\}\}\]
is dense in $\mathbb{R}^n$. The continuity of the map $f:\mathbb{R}^n\to\mathbb{R}^n$
defined by $f(x_1,x_2,\ldots,x_n)=(e^{x_1},e^{x_2},\ldots,e^{x_n})$ implies that
the set
\[\{((e^{a_1})^k e^{s_1}, (e^{a_2})^k e^{s_2},\ldots, (e^{a_n})^k e^{s_n})^t:k,s_1,\ldots,s_n\in
\mathbb{N}\cup\{0\}\}\]
is dense in $(\mathbb{R}^+)^n$. An easy argument (see for example the proof of lemma 2.6 in \cite{cohama1})
shows that the set
\[\left\{\left(
\begin{array}{c}
(e^{a_1})^k (-\sqrt{e})^{s_1}\\
(e^{a_2})^k (-\sqrt{e})^{s_2}\\
\vdots\\
(e^{a_n})^k (-\sqrt{e})^{s_n}
\end{array} \right):k,s_1,\ldots,s_n\in\mathbb{N}\cup\{0\}\right\}\]
is dense in $\mathbb{R}^n$.
Let
\[\textbf{1}=\left(
\begin{array}{c}
1\\
1\\
\vdots\\
1
\end{array}\right),\quad
A=
\left(
\begin{array}{cccc}
e^{a_1} & & & \\
 & e^{a_2} & & \\
 & & \ddots & \\
 & & & e^{a_n}
\end{array}
\right)
\]
\[B_1= \left(
\begin{array}{cccc}
-\sqrt{e} & & & \\
 & 1 & & \\
 & & \ddots & \\
 & & & 1
\end{array}\right),\; \ldots\;,
B_n=\left(\begin{array}{cccc}
1 & & & \\
 & 1 & & \\
 & & \ddots & \\
 & & & -\sqrt{e}
\end{array}\right).
\]
Then the set
\[\{A^k B_1^{s_1}\ldots B_n^{s_n} \textbf{1}:k,s_1,\ldots,s_n\in\mathbb{N}\cup\{0\}\}\]
is dense in $\mathbb{R}^n$, which implies that the $(n+1)$-tuple
$(A,B_1,\ldots,B_n)$ of diagonal matrices is hypercyclic.
\end{proof}
All of the results mentioned at the beginning of this section as well as the one proved above show that the
length of a hypercyclic tuple of diagonal matrices depends on the dimension of the space. It comes as a surprise
that this is not the case for locally hypercyclic tuples of diagonal matrices. In fact, we show that on a vector
space of any finite dimension $n\geq 2$ one may construct a pair of diagonal matrices which is locally
hypercyclic.
\begin{theorem}\label{diagonal}
Let $a,b \in\mathbb{R}$ such that $-1<a<0$, $b>1$ and $\frac{ \ln |a|}{\ln b}$ is irrational. Let $n$ be a
positive integer with $n\geq 2$ and consider the $n\times n$ matrices
\[A=
\left(
\begin{array}{lllcl}
a_1 & 0 & 0 & \ldots & 0\\
0 & a_2 & 0 & \ldots & 0\\
0 & 0 & a_3 & \ldots & 0\\
\vdots & \vdots & \vdots & \ddots & 0\\
0& 0& 0& \ldots & a_n
\end{array}
\right),\quad B= \left(
\begin{array}{lllcl}
b_1 & 0 & 0 & \ldots & 0\\
0 & b_2 & 0 & \ldots & 0\\
0 & 0 & b_3 & \ldots & 0\\
\vdots & \vdots & \vdots & \ddots & 0\\
0& 0& 0& \ldots & b_n
\end{array}\right)
\]
where $a_1=a$, $b_1=b$, $a_j, b_j$ real numbers with $|a_j|>1$ and $|b_j|>1$ for $j=2,\ldots n$. Then $( A, B )$
is a locally hypercyclic pair on $\mathbb{R}^n$ which is not hypercyclic. In particular, we have
\[\{x\in\mathbb{R}^n:J_{(A,B)}(x)=\mathbb{R}^n\}=\{(x_1,0,\ldots,0)^t\in\mathbb{R}^n: x_1\in\mathbb{R}\}.\]
\end{theorem}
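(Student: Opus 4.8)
The plan is to prove the displayed equality of sets directly, since both the local hypercyclicity of $(A,B)$ and the failure of hypercyclicity will fall out of it. For any $x=(x_1,\dots,x_n)^t$ and $k,s\in\mathbb{N}\cup\{0\}$ one has $A^kB^sx=(a^kb^sx_1,\,a_2^kb_2^sx_2,\dots,a_n^kb_n^sx_n)^t$, and the entire argument is organized around the contrast between the first coordinate, where $|a|<1<b$, and the remaining coordinates, where $|a_j|,|b_j|>1$. The first thing I would do is analyse the first coordinate: since $|a|\in(0,1)$, $b>1$ and $\ln|a|/\ln b$ is irrational, the Kronecker/equidistribution argument already used in the previous theorem (compare the proof of Lemma~2.6 in \cite{cohama1}) shows that $\{|a|^kb^s:k,s\in\mathbb{N}\cup\{0\}\}$ is dense in $(0,+\infty)$; moreover, because $(k,s)\mapsto k\ln|a|+s\ln b$ is injective on $(\mathbb{N}\cup\{0\})^2$ (again by irrationality), the preimage of any small interval is infinite, so each value is approached along pairs $(k,s)$ with $k+s\to+\infty$. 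Splitting according to the parity of $k$ (so that $2\ln|a|/\ln b$, respectively its translate by $\ln|a|$, is still irrational) and using $a<0$ to produce the sign, this upgrades to the statement I really want: for every $x_1\neq0$ the set $\{a^kb^sx_1:k,s\in\mathbb{N}\cup\{0\}\}$ is dense in $\mathbb{R}$, with approximating pairs satisfying $k+s\to+\infty$; and for the target $0$ one simply takes $s=0$, $k\to+\infty$.

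Next I would show that $J_{(A,B)}(x)=\mathbb{R}^n$ whenever $x=(x_1,0,\dots,0)^t$. Fix $y=(y_1,\dots,y_n)^t\in\mathbb{R}^n$. If $x_1\neq0$, pick $k_m,s_m$ as above with $a^{k_m}b^{s_m}x_1\to y_1$ and $k_m+s_m\to+\infty$, and set $x_m:=\bigl(x_1,\,y_2/(a_2^{k_m}b_2^{s_m}),\dots,y_n/(a_n^{k_m}b_n^{s_m})\bigr)^t$. Since $|a_j|,|b_j|>1$ forces $|a_j^{k_m}b_j^{s_m}|\to+\infty$ for $j\geq2$, we get $x_m\to x$, while $A^{k_m}B^{s_m}x_m=(a^{k_m}b^{s_m}x_1,y_2,\dots,y_n)^t\to y$, so $y\in J_{(A,B)}(x)$. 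If $x_1=0$, i.e.\ $x=0$, the same conclusion follows even more easily with $k_m=0$, $s_m\to+\infty$ and $x_m:=(y_1/b^{s_m},y_2/b_2^{s_m},\dots,y_n/b_n^{s_m})^t$, for which $B^{s_m}x_m=y$ exactly and $x_m\to0$. Taking $x=(1,0,\dots,0)^t\neq0$ then shows $(A,B)$ is locally hypercyclic.

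For the reverse inclusion, I would argue by contradiction: suppose $x_j\neq0$ for some $j\geq2$. If $x_m\to x$ and $k_m,s_m\geq0$ with $k_m+s_m\to+\infty$, then $|(A^{k_m}B^{s_m}x_m)_j|=|a_j|^{k_m}|b_j|^{s_m}\,|x_m^{(j)}|\geq\bigl(\min(|a_j|,|b_j|)\bigr)^{k_m+s_m}|x_m^{(j)}|\to+\infty$, since $x_m^{(j)}\to x_j\neq0$. Hence no $y\in\mathbb{R}^n$ can belong to $J_{(A,B)}(x)$; that is, $J_{(A,B)}(x)=\emptyset\neq\mathbb{R}^n$. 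Combined with the previous paragraph this gives the claimed equality of sets; and since $n\geq2$ the vector $(0,1,0,\dots,0)^t$ witnesses that $J_{(A,B)}$ is not all of $\mathbb{R}^n$ at some point, so by the Proposition in Section~2 the pair $(A,B)$ is not hypercyclic. (Alternatively one sees non-hypercyclicity at once: $|a_j^kb_j^s|\geq1$ for all $k,s\geq0$ and all $j\geq2$, so every orbit either lies inside a proper coordinate subspace or stays bounded away from it in some coordinate, and can never be dense.)

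The single step I expect to demand real care is the first one — obtaining simultaneous control of the modulus of $a^kb^sx_1$ (through Kronecker's theorem) and of its sign (through the parity of $k$, exploiting $-1<a<0$), while keeping $k+s$ unbounded so that the resulting pairs are admissible in the definition of $J_{(A,B)}$. Once that density statement for the first coordinate is in hand, everything else is routine bookkeeping with the diagonal action, driven by the single dichotomy $|a_j^kb_j^s|\to+\infty$ for $j\geq2$ versus density of $\{a^kb^s x_1\}$ for the first coordinate.
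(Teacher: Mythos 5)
Your proposal is correct, and its core coincides with the paper's proof: both rest on the density of $\{a^k b^l : k,l\in\mathbb{N}\}$ in $\mathbb{R}$ (the paper simply cites Lemma~2.6 of \cite{cohama1}, which is exactly the parity-of-$k$ Kronecker argument you sketch) and on the same perturbation $x_m$ whose coordinates $j\geq 2$ are $y_j/(a_j^{k_m}b_j^{l_m})\to 0$, so that $A^{k_m}B^{l_m}x_m\to y$. Where you diverge is in the peripheral steps, and in each case your route is more self-contained: you treat arbitrary $(x_1,0,\ldots,0)^t$ and the origin directly (the explicit choice $k_m=0$, $B^{s_m}x_m=y$), whereas the paper handles only $(1,0,\ldots,0)^t$ and then invokes homogeneity of $J$-sets together with Lemma~\ref{basiclemma2} to cover $0$; you make explicit the estimate $|a_j^{k}b_j^{l}|\geq\min(|a_j|,|b_j|)^{k+l}\to\infty$ showing $J_{(A,B)}(u)=\emptyset$ when $u_j\neq 0$ for some $j\geq 2$, which the paper leaves as ``clear''; and you obtain non-hypercyclicity either from the Proposition of Section~2 or from the elementary observation that $|a_j^k b_j^l y_j|\geq |y_j|$ keeps orbits away from the hyperplanes, whereas the paper cites Feldman's theorem that no $n$-tuple of diagonal matrices on $\mathbb{R}^n$ has a somewhere dense orbit. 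Your version thus trades two external citations for short direct arguments, at the cost of a somewhat longer write-up; the paper's version is shorter but leans on \cite{cohama1} and \cite{Feldman3}. Both are complete and correct.
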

\begin{proof}
Note that
\[
A^k B^l= \left(
\begin{array}{lllcl}
a^k b^l & 0       & 0       & \ldots & 0\\
0      & a_2^k b_2^l & 0       & \ldots & 0\\
0      & 0       & a_3^k b_3^l & \ldots & 0\\
\vdots & \vdots  & \vdots  & \ddots & 0\\
0      & 0      & 0        & \ldots & a_n^k b_n^l
\end{array}
\right).
\]
Let $x=(1,0,0,\ldots,0)^t\in\mathbb{R}^n$. We will show that $J_{(A,B)}(x)=\mathbb{R}^n$. Fix a vector
$y=(y_1,\ldots,y_n)^t$. By \cite[lemma 2.6]{cohama1}, the sequence $\{ a^k b^l : k,l\in \mathbb{N} \}$ is dense
in $\mathbb{R}$. Hence there exist sequences of positive integers $\{k_i\}$ and $\{l_i\}$ with $k_i,l_i\to
+\infty$ such that $a^{k_i}b^{l_i}\to y_1$. Let \[x_i=\left(1,\frac{y_2}{a_2^{k_i} b_2
^{l_i}},\ldots,\frac{y_n}{a_n^{k_i}b_n^{l_i}}\right)^t.\] Obviously $x_i\to x$ and
\[
A^{k_i}B^{l_i}x_i=(a^{k_i}b^{l_i},y_2,\ldots,y_n)^t\to y.
\]
In \cite[theorems 3.4 and 3.6]{Feldman3} Feldman showed that there
exists a hypercyclic $(n+1)$-tuple of diagonal matrices on
$\mathbb{C}^n$, for every $n\in\mathbb{N}$ but there is no
hypercyclic $n$-tuple of diagonal matrices on $\mathbb{C}^n$ or on
$\mathbb{R}^n$. Feldman actually showed that there is no $n$-tuple
of diagonal matrices on $\mathbb{C}^n$ or $\mathbb{R}^n$  that has a
somewhere dense orbit \cite[theorem 4.4]{Feldman3}. So the pair
$(A,B)$ is not hypercyclic. To finish, note that for every
$\lambda\in\mathbb{R}\setminus\{0\}$ it holds that $J_{(A,B)}(\lambda x)=
\lambda J_{(A,B)}(x)=\mathbb{R}^n$. In view of Lemma \ref{basiclemma2}
it follows that $J_{(A,B)}(0)=\mathbb{R}^n$. On the other hand, by
the choice of $a_j,b_j$ for $j=2,\ldots,n$ it is clear that for any
vector $u=(u_1,u_2,\ldots,u_n)^t$ with $u_j\neq 0$ for some $j\in\{2,3,\ldots
,n\}$ we have $J_{(A,B)}(u)\neq\mathbb{R}^n$. This completes the proof of the
theorem.
\end{proof}
A direct analogue to the previous theorem also holds in the complex
setting. We will make use of the following result in \cite{Feldman3}
due to Feldman.

\begin{proposition}
(i) If $b\in\mathbb{C}\setminus\{0\}$ with $|b|<1$ then there is a
dense set $\Delta_b\subset\{z\in\mathbb{C}:|z|>1\}$ such that for
any $a\in\Delta_b$, we have $\{a^k b^l:k,l\in\mathbb{N}\}$ is dense
in $\mathbb{C}$.

(ii) If $a\in\mathbb{C}$ with $|a|>1$, then there is a dense set
$\Delta_a\subset\{z\in\mathbb{C}:|z|<1\}$ such that for any
$b\in\Delta_a$, we have $\{a^k b^l: k,l\in\mathbb{N}\}$ is dense in
$\mathbb{C}$.

\end{proposition}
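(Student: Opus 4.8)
The plan is to pass to polar coordinates and reduce the statement to a simultaneous density (Kronecker--Weyl) problem on $\mathbb{R}\times(\mathbb{R}/2\pi\mathbb{Z})$. Writing $b=e^{s}e^{i\beta}$ with $s=\ln|b|<0$ and a fixed argument $\beta$, and $a=e^{t}e^{i\alpha}$ with $t=\ln|a|>0$, one has $a^{k}b^{l}=e^{kt+ls}\,e^{i(k\alpha+l\beta)}$, so $\{a^{k}b^{l}:k,l\in\mathbb{N}\}$ is dense in $\mathbb{C}$ as soon as $\{(kt+ls,\;k\alpha+l\beta\bmod 2\pi):k,l\in\mathbb{N}\}$ is dense in $\mathbb{R}\times(\mathbb{R}/2\pi\mathbb{Z})$ (the point $0\in\mathbb{C}$ is automatically in the closure, since $b^{l}\to 0$). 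Put $\theta:=\alpha-\tfrac{t}{s}\beta$; note that whether $1$, $t/s$, $\theta/2\pi$ are linearly independent over $\mathbb{Q}$ does not depend on the chosen representative of $\beta$. For part (i) I would take $\Delta_{b}$ to be the set of all $a$ with $|a|>1$ for which these three numbers are $\mathbb{Q}$-linearly independent (in particular $t/s\notin\mathbb{Q}$).

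Next I would establish density of the two-dimensional set. Since $t/s\notin\mathbb{Q}$, the set $\{kt+ls:k,l\in\mathbb{N}\}$ is dense in $\mathbb{R}$ (as in \cite[lemma 2.6]{cohama1}); more usefully, for a fixed target $\rho\in\mathbb{R}$ and $\delta>0$, a large integer $k$ admits an $l=l(k)\geq 0$ with $|kt+ls-\rho|<\delta$ precisely when $k\,(t/|s|)\bmod 1$ lies in a fixed interval $J$ of length $2\delta/|s|$, and then $l(k)=\tfrac{kt-\rho}{|s|}+O(\delta)$, so that
\[
k\alpha+l(k)\beta\;=\;k\theta\;+\;c(\rho)\;+\;O(\delta)\pmod{2\pi}
\]
for a constant $c(\rho)$ depending only on $\rho$. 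Now the $\mathbb{Q}$-independence of $1$, $t/s$, $\theta/2\pi$ lets me invoke Weyl's equidistribution theorem: $\big(k\,(t/|s|),\;k\,(\theta/2\pi)\big)$ is equidistributed in $(\mathbb{R}/\mathbb{Z})^{2}$, hence there are arbitrarily large $k$ with $k\,(t/|s|)\bmod 1$ in $J$ and $k\,(\theta/2\pi)\bmod 1$ as close as desired to any prescribed value. Thus, given $z=re^{i\phi}\neq 0$ and $\varepsilon>0$, a small enough $\delta$ and a suitable choice of that prescribed value produce $k$ and $l=l(k)$ with $|e^{kt+ls}-r|<\varepsilon$ and $k\alpha+l\beta$ within $\varepsilon$ of $\phi$ modulo $2\pi$, whence $|a^{k}b^{l}-z|<C\varepsilon$. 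This proves $\{a^{k}b^{l}:k,l\in\mathbb{N}\}$ is dense in $\mathbb{C}$ whenever $a\in\Delta_{b}$.

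It then remains to check that $\Delta_{b}$ is dense in $\{|z|>1\}$ and to dispatch part (ii). Given a nonempty open $U\subset\{|z|>1\}$, the radii $\rho>1$ with $\ln\rho/\ln|b|\in\mathbb{Q}$ are countable, so I fix a ``good'' radius $\rho_{\ast}$ whose circle meets $U$; on the arc $\{\rho_{\ast}e^{i\psi}\}\cap U$ one has $\theta=\psi-\tfrac{\ln\rho_{\ast}}{\ln|b|}\beta$, and since $1$ and $\ln\rho_{\ast}/\ln|b|$ are already $\mathbb{Q}$-independent, any $\mathbb{Q}$-dependence of $1$, $\ln\rho_{\ast}/\ln|b|$, $\theta/2\pi$ pins $\theta$ (hence $\psi$) to a single value for each of the countably many possible rational relations; so all but countably many $\psi$ on the arc give $a\in\Delta_{b}$, and $U\cap\Delta_{b}\neq\emptyset$ (in fact $\Delta_{b}$ is the complement of a countable union of circles and real-analytic curves, hence a dense $G_{\delta}$). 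Part (ii) is entirely symmetric: now $t=\ln|a|>0$ and $\alpha=\arg a$ are fixed and one is free to choose $s=\ln|b|\in(-\infty,0)$ and $\beta=\arg b$; interchanging the roles of $k$ and $l$ (solve $k=k(l)=\tfrac{\rho-ls}{t}+O(\delta)\geq 0$ for large $l$, and set $\theta':=\beta-\tfrac{s}{t}\alpha$) one runs the same argument and takes $\Delta_{a}$ to be the set of $b$ with $0<|b|<1$ for which $1$, $s/t$, $\theta'/2\pi$ are $\mathbb{Q}$-linearly independent, which is dense in $\{|z|<1\}$ by the same counting.

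The hard part is the second step: a single pair $(k,l)$, ranging only over the first quadrant, must control both the modulus $e^{kt+ls}$ and the argument $k\alpha+l\beta$ at once. Merely knowing that the modulus-admissible indices $k$ are infinite would not force the corresponding angles $k\theta\bmod 2\pi$ to be dense; what is needed is genuine simultaneous equidistribution on the $2$-torus, which is exactly why one imposes the $\mathbb{Q}$-independence of $1$, $t/s$, $\theta/2\pi$ and then verifies, via the counting argument above, that this can be arranged with $a$ in any prescribed open subset of $\{|z|>1\}$.
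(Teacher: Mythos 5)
Your proof cannot really be compared with ``the paper's proof,'' because the paper does not prove this proposition at all: it is stated as a known result of Feldman, quoted from \cite{Feldman3}, and is used as a black box (to feed Theorem 3.3). What you have written is therefore a self-contained proof of the cited result, and as far as I can check it is correct. The polar-coordinate reduction is sound: with $a^kb^l=e^{kt+ls}e^{i(k\alpha+l\beta)}$, the observation that forcing $|kt+ls-\rho|<\delta$ with $l\geq 0$ amounts to $k\,t/|s| \bmod 1$ falling in an arc of length $2\delta/|s|$ (centred at $\rho/|s|$), and that the admissible $l(k)=(kt-\rho)/|s|+O(\delta)$ then makes the argument equal to $k\theta+c(\rho)+O(\delta)$ with $\theta=\alpha-(t/s)\beta$, is exactly the right bookkeeping; the $\mathbb{Q}$-independence of $1$, $t/|s|$, $\theta/2\pi$ together with Weyl's theorem then gives the simultaneous control of modulus and argument along a single sequence $(k,l(k))$ of indices in the positive quadrant, which, as you note in your last paragraph, is the genuine crux (density of the moduli alone would not suffice). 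The countability argument for the density of $\Delta_b$ is also fine: the exceptional $a$ lie on the countably many circles with $t/s\in\mathbb{Q}$ plus, for each rational relation with nonzero coefficient on $\theta/2\pi$, countably many curves, so every open subset of $\{|z|>1\}$ meets $\Delta_b$; part (ii) is symmetric with the roles of $k$ and $l$ exchanged. Whether this coincides with Feldman's original argument cannot be judged from the present paper, but it is the natural Kronecker--Weyl route, and the only caveats are cosmetic (e.g.\ the arc $J$ should be read in $\mathbb{R}/\mathbb{Z}$, and the final error constant depends on the target point), not gaps.
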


\begin{theorem}
Let $a,b\in\mathbb{C}$ such that $\{a^k b^l: k,l\in\mathbb{N}\}$ is dense in $\mathbb{C}$. Let $n$ be a positive
integer with $n\geq 2$ and consider the diagonal matrices $A$ and $B$ as in Theorem \ref{diagonal} where
$a_1=a$, $b_1=b$, $a_j, b_j\in\mathbb{C}$ with $|a_j|>1$ and $|b_j|>1$ for $j=2,\ldots,n$. Then $(A,B)$ is a
locally hypercyclic pair on $\mathbb{C}^n$ which is not hypercyclic. In particular, we have
\[\{z\in\mathbb{C}^n:J_{(A,B)}(z)=\mathbb{C}^n\}=\{(z_1,0,\ldots,0)^t\in\mathbb{C}^n: z_1\in\mathbb{C}\}.\]
\end{theorem}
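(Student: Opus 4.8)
The plan is to mimic almost verbatim the proof of Theorem \ref{diagonal}, replacing the real density result \cite[lemma 2.6]{cohama1} by the complex density hypothesis built into the statement (and which is realizable by the preceding Proposition). First I would record that
\[
A^k B^l=\mathrm{diag}\bigl(a^k b^l,\;a_2^k b_2^l,\;\ldots,\;a_n^k b_n^l\bigr),
\]
exactly as before. Then I would set $z=(1,0,\ldots,0)^t\in\mathbb{C}^n$ and show $J_{(A,B)}(z)=\mathbb{C}^n$: given an arbitrary target $w=(w_1,\ldots,w_n)^t$, the hypothesis that $\{a^k b^l:k,l\in\mathbb{N}\}$ is dense in $\mathbb{C}$ yields sequences of positive integers $k_i,l_i\to+\infty$ with $a^{k_i}b^{l_i}\to w_1$; one then takes
\[
z_i=\Bigl(1,\tfrac{w_2}{a_2^{k_i}b_2^{l_i}},\ldots,\tfrac{w_n}{a_n^{k_i}b_n^{l_i}}\Bigr)^t,
\]
observes $z_i\to z$ because $|a_j|,|b_j|>1$ forces the denominators to blow up, and checks $A^{k_i}B^{l_i}z_i=(a^{k_i}b^{l_i},w_2,\ldots,w_n)^t\to w$. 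Hence $z\in X\setminus\{0\}$ witnesses local hypercyclicity, and since $\mathbb{C}^n$ is separable the pair is locally hypercyclic.

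Next I would handle the remaining assertions precisely as in Theorem \ref{diagonal}. Non-hypercyclicity is immediate from \cite[theorem 4.4]{Feldman3}, which says no $n$-tuple of diagonal matrices on $\mathbb{C}^n$ has even a somewhere dense orbit. For the identification of $\{z:J_{(A,B)}(z)=\mathbb{C}^n\}$, I note that $J_{(A,B)}$ commutes with scalar multiplication, so $J_{(A,B)}(\lambda z)=\lambda J_{(A,B)}(z)=\mathbb{C}^n$ for every $\lambda\in\mathbb{C}\setminus\{0\}$; applying Lemma \ref{basiclemma2} to a sequence $\lambda_m z\to 0$ gives $J_{(A,B)}(0)=\mathbb{C}^n$ as well, which covers the whole line $\{(z_1,0,\ldots,0)^t\}$. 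Conversely, if $u=(u_1,\ldots,u_n)^t$ has $u_j\neq 0$ for some $j\geq 2$, then the $j$-th coordinate of $A^k B^l u$ is $a_j^k b_j^l u_j$, whose modulus $|a_j|^k|b_j|^l|u_j|$ is bounded below by $|u_j|>0$ (since $|a_j|,|b_j|>1$ and $k,l\geq 0$); this bound is stable under small perturbations of $u$, so $0$ is not in the closure of any such set of images and $J_{(A,B)}(u)\neq\mathbb{C}^n$. Combining the two directions yields the claimed description.

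I do not expect a genuine obstacle here: the proof is a transcription of the real case, and the only point requiring the slightest care is that the complex density statement is an honest hypothesis rather than something one proves on the spot — but the preceding Proposition guarantees that pairs $(a,b)$ satisfying it exist in abundance (for instance, fix any $b$ with $0<|b|<1$ and choose $a\in\Delta_b$), so the theorem is not vacuous. If anything needs emphasis, it is the remark that the argument for $J_{(A,B)}(0)=\mathbb{C}^n$ genuinely uses Lemma \ref{basiclemma2} together with the homogeneity of the extended limit set under nonzero scalars, since $0$ itself is excluded from the definition of a locally hypercyclic tuple.
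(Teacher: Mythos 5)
Your proposal is correct and follows essentially the same route as the paper: the paper's proof of this theorem is literally ``the proof follows along the same lines as that of Theorem \ref{diagonal}'', and your write-up is precisely that adaptation (density of $\{a^k b^l\}$ replacing the real density lemma, the same perturbed vectors $z_i$, Feldman's somewhere-dense-orbit theorem for non-hypercyclicity, and homogeneity plus Lemma \ref{basiclemma2} for the description of the $J$-set). Your explicit lower-bound argument for $J_{(A,B)}(u)\neq\mathbb{C}^n$ when $u_j\neq 0$, $j\geq 2$, just spells out what the paper leaves as ``clear''.
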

\begin{proof}
The proof follows along the same lines as that of Theorem
\ref{diagonal}.
\end{proof}

\section{Locally hypercyclic pairs of diagonal operators which are not hypercyclic in infinite dimensional spaces}
In this section we slightly modify the construction in Theorem 3.2 in order to obtain similar results in
infinite dimensional spaces. As usual the symbol $l^p(\mathbb{N})$ stands for the Banach space of $p$-summable
sequences, where $1\leq p<\infty$ and by $l^{\infty}(\mathbb{N})$ we denote the Banach space of bounded
sequences (either over $\mathbb{R}$ or $\mathbb{C}$).

\begin{theorem}
Let $a,b\in\mathbb{C}$ such that $\{a^k b^l: k,l\in\mathbb{N}\}$ is dense in $\mathbb{C}$ and let $c\in
\mathbb{C}$ with $|c|>1$. Consider the diagonal operators $T_j:l^p(\mathbb{N})\to l^p(\mathbb{N})$, $1\leq p\leq
\infty$, $j=1,2$, defined by
$$T_1(x_1, x_2, x_3, \ldots )=(ax_1, cx_2, cx_3, \ldots ),$$
$$T_2(x_1, x_2, x_3, \ldots )=(bx_1, cx_2, cx_3, \ldots ),$$
for $x=(x_1, x_2, x_3, \ldots )\in l^p(\mathbb{N})$, $1\leq p\leq \infty$. Then $(T_1, T_2)$ is a locally
hypercyclic, non-hypercyclic pair in $l^p(\mathbb{N})$ for every $1\leq p<\infty$ and $(T_1, T_2)$ is a locally
topologically transitive, non-topologically transitive pair in $l^{\infty}(\mathbb{N})$. In particular we have
$$ \{ x\in l^p(\mathbb{N}): J_{(T_1,T_2)}(x)=l^p(\mathbb{N}) \}= \{ (x_1, 0,0,\ldots ): x_1 \in \mathbb{C} \}$$
for every $1\leq p\leq \infty$.
\end{theorem}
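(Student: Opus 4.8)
The plan is to mimic the proof of Theorem~\ref{diagonal}, with two modifications: the ``finitely many coordinates'' argument must be replaced by a tail-estimate argument that works for infinite sequences, and the $l^\infty$ case must be handled via topological transitivity rather than hypercyclicity (since $l^\infty(\mathbb{N})$ is not separable). First I would observe that for $x=(x_1,x_2,\ldots)$ one has
\[
T_1^k T_2^l x = \bigl(a^k b^l x_1,\, c^{k+l} x_2,\, c^{k+l} x_3,\,\ldots\bigr),
\]
so the first coordinate behaves exactly as in the finite-dimensional case while all remaining coordinates are multiplied by the common factor $c^{k+l}$ with $|c|>1$.

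Next, to show $J_{(T_1,T_2)}(e_1)=l^p(\mathbb{N})$ where $e_1=(1,0,0,\ldots)$: fix a target $y=(y_1,y_2,y_3,\ldots)\in l^p(\mathbb{N})$ and $\varepsilon>0$. Since $\{a^k b^l:k,l\in\mathbb{N}\}$ is dense in $\mathbb{C}$, choose sequences $k_i,l_i\to\infty$ with $a^{k_i}b^{l_i}\to y_1$. The key point is to define the approximating sequence carefully so that it lies in $l^p$ and converges to $e_1$: for $1\le p<\infty$ pick, for each $i$, an integer $N_i\to\infty$ such that $\sum_{m>N_i}|y_m|^p<\varepsilon/2$, and set
\[
x_i=\Bigl(1,\, \tfrac{y_2}{c^{k_i+l_i}},\,\ldots,\,\tfrac{y_{N_i}}{c^{k_i+l_i}},\,0,\,0,\,\ldots\Bigr).
\]
Because $|c|>1$ and $k_i+l_i\to\infty$, the finite block $(y_2/c^{k_i+l_i},\ldots,y_{N_i}/c^{k_i+l_i})$ tends to $0$ in $l^p$ norm, so $x_i\to e_1$; and $T_1^{k_i}T_2^{l_i}x_i=(a^{k_i}b^{l_i},y_2,\ldots,y_{N_i},0,\ldots)$, whose distance to $y$ is at most $|a^{k_i}b^{l_i}-y_1|+(\sum_{m>N_i}|y_m|^p)^{1/p}\to 0$. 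This gives $y\in J_{(T_1,T_2)}(e_1)$, hence $J_{(T_1,T_2)}(e_1)=l^p(\mathbb{N})$, and scaling together with Lemma~\ref{basiclemma2} gives $J_{(T_1,T_2)}(\lambda e_1)=l^p(\mathbb{N})$ for all $\lambda$ including $\lambda=0$, exactly as in Theorem~\ref{diagonal}. For $1\le p<\infty$ the space is separable, so this yields local hypercyclicity; for $p=\infty$ one instead invokes the open-set reformulation of $J(x)=X$ recorded in Section~2 to conclude that $(T_1,T_2)$ is locally topologically transitive.

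For the negative direction and the identification of the set of ``$J$-full'' vectors: if $x=(x_1,x_2,\ldots)$ has some coordinate $x_j\ne 0$ with $j\ge 2$, then for every $S=T_1^kT_2^l$ the $j$-th coordinate of $Sx$ equals $c^{k+l}x_j$, which has modulus $|c|^{k+l}|x_j|$; since perturbing $x$ slightly keeps the $j$-th coordinate bounded away from $0$ and $|c|>1$ forces this coordinate's modulus to lie in a discrete unbounded set once $k+l\ge1$, one checks that $J_{(T_1,T_2)}(x)$ cannot contain, say, a vector whose $j$-th coordinate is $0$ — so $J_{(T_1,T_2)}(x)\ne l^p(\mathbb{N})$. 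This establishes the displayed equality for the set of vectors with full extended limit set. Finally, non-hypercyclicity (resp. non-transitivity) follows because a hypercyclic vector, or a transitivity argument, would in particular force density of orbits (resp. of images of open sets) in the second coordinate direction, which is impossible since a single scalar operator $z\mapsto$ (action on one coordinate) is never hypercyclic and the common-factor structure in coordinates $\ge 2$ prevents transitivity; more simply, any hypercyclic/transitive tuple would restrict to one on a finite-dimensional coordinate subspace in a way excluded by Feldman's results or by a direct modulus argument as above. The main obstacle is purely the infinite-dimensional bookkeeping: ensuring $x_i\in l^p$ and $x_i\to e_1$ simultaneously with $T_1^{k_i}T_2^{l_i}x_i\to y$, which is precisely why the truncation parameter $N_i$ must be coupled to the tail of $y$; everything else is a transcription of the finite-dimensional argument.
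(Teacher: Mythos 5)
Your overall strategy is the paper's strategy (reduce to the first coordinate via the density of $\{a^kb^l\}$, push the remaining coordinates into the perturbation of $e_1$, then scale and use Lemma \ref{basiclemma2} for $\lambda=0$), and your treatment of the negative direction and of the set of $J$-full vectors is in the same spirit as the paper's (the paper projects onto the second coordinate, resp.\ onto the first two coordinates together with Feldman's theorem that no pair of diagonal matrices on $\mathbb{C}^2$ is hypercyclic; your ``restriction'' to a coordinate subspace should really be phrased as a projection/quotient, but the idea is the same and correct).

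There is, however, one genuine gap: your construction of the approximating vectors $x_i$ via truncation at $N_i$ is stated only for $1\le p<\infty$, and it cannot be carried over to $p=\infty$, because for a general $y\in l^{\infty}(\mathbb{N})$ the tail quantity $\sup_{m>N}|y_m|$ need not tend to $0$ as $N\to\infty$ (take $y=(1,1,1,\ldots)$); so $T_1^{k_i}T_2^{l_i}x_i=(a^{k_i}b^{l_i},y_2,\ldots,y_{N_i},0,0,\ldots)$ does not converge to $y$ in the sup norm. Your remark that for $p=\infty$ one ``invokes the open-set reformulation'' only fixes the terminology (locally topologically transitive instead of locally hypercyclic); it does not supply the missing proof that $J_{(T_1,T_2)}(e_1)=l^{\infty}(\mathbb{N})$. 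The fix is that no truncation is needed at all, for any $p$: take, as the paper does,
\[
x_i=\Bigl(1,\tfrac{y_2}{c^{k_i+l_i}},\tfrac{y_3}{c^{k_i+l_i}},\ldots\Bigr)=e_1+c^{-(k_i+l_i)}(0,y_2,y_3,\ldots),
\]
which lies in $l^p(\mathbb{N})$ because it is $e_1$ plus a scalar multiple of an $l^p$ vector, satisfies $\|x_i-e_1\|_p=|c|^{-(k_i+l_i)}\|(0,y_2,y_3,\ldots)\|_p\to 0$ for every $1\le p\le\infty$, and gives $T_1^{k_i}T_2^{l_i}x_i=(a^{k_i}b^{l_i},y_2,y_3,\ldots)$, which agrees with $y$ in every coordinate except the first and hence converges to $y$ in all the norms at once. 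So the ``infinite-dimensional bookkeeping'' you flag as the main obstacle disappears; coupling $N_i$ to the tail of $y$ is both unnecessary for $p<\infty$ and fatal for $p=\infty$.
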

\begin{proof}
Fix $1\leq p\leq \infty$ and consider a vector $y=(y_1,y_2,\ldots )\in l^p(\mathbb{N})$. There exist sequences
of positive integers $\{k_i\}$ and $\{l_i\}$ with $k_i,l_i\to +\infty$ such that $a^{k_i}b^{l_i}\to y_1$. Let
\[x_i=\left(1,\frac{y_2}{c^{k_i +l_i}},\frac{y_3}{c^{k_i +l_i}},\ldots  \right).\] Obviously
$x_i\to x=(1,0,0,\ldots )$ and
\[
T_1^{k_i}T_2^{l_i}x_i=(a^{k_i}b^{l_i},y_2,y_3,\dots ) \to y.
\]
Therefore $J_{(T_1,T_2)}(x)=l^p(\mathbb{N})$. For $p=2$ the pair $(T_1,T_2)$ is not hypercyclic by Feldman's
result which says that there are no hypercyclic tuples of normal operators in infinite dimensions, see
\cite{Feldman3}. However, one can show directly that for every $1\leq p< \infty$ the pair $(T_1,T_2)$ is not
hypercyclic and $(T_1,T_2)$ is not topologically transitive in $l^{\infty}(\mathbb{N})$. Indeed, suppose that
$x=(x_1,x_2,\ldots )\in l^p(\mathbb{N})$ is hypercyclic for the pair $(T_1,T_2)$, where $1\leq p<\infty$. Then
necessarily $x_2\neq 0$ and the sequence $\{ c^n\}$ should be dense in $\mathbb{C}$ which is a contradiction.
For the case $p=\infty$, assuming that the pair $(T_1,T_2)$ is topologically transitive we conclude that the
pair $(A,B)$ is topologically transitive in $\mathbb{C}^2$, where $A(x_1,x_2)=(ax_1,cx_2)$,
$B(x_1,x_2)=(bx_1,cx_2)$, $(x_1,x_2)\in \mathbb{C}^2$. The latter implies that $(A,B)$ is hypercyclic. Since no
pair of diagonal matrices is hypercyclic in $\mathbb{C}^2$, see \cite{Feldman3}, we arrive at a contradiction.
It is also easy to check that $ \{ x\in l^p(\mathbb{N}): J_{(T_1,T_2)}(x)=l^p(\mathbb{N}) \}= \{ (x_1,
0,0,\ldots ): x_1 \in \mathbb{C} \}$ for every $1\leq p\leq \infty$.
\end{proof}

\begin{remark}
Theorem 4.1 is valid for the $l^p(\mathbb{N})$ spaces over the reals as well. Concerning the non-separable
Banach space $l^{\infty}(\mathbb{N})$ we stress that this space does not support topologically transitive
operators, see \cite{BeKa}. On the other hand there exist operators acting on $l^{\infty}(\mathbb{N})$ which are
locally topologically transitive, see \cite{cosma1}.
\end{remark}

\section{Locally hypercyclic pairs of upper triangular non-diagonal matrices which are not hypercyclic}
We first show that it is possible for numbers $a_1,a_2\in\mathbb{R}$ to exist with the property that the set
\[
\left\{\frac{a_1^{k} a_2^{l}}{\frac{k}{a_1}+\frac{l}{a_2}} : k,l\in\mathbb{N}\right\}
\]
is dense in $\mathbb{R}$ and at the same time the sequences on both the numerator and denominator stay
unbounded. For our purposes we will show that the set above with $a_2=-1$ and $a_1=a$ is dense in $\mathbb{R}$
for any $a\in\mathbb{R}$ with $a>1$. Actually we shall prove that the set
\[
\left\{\frac{\frac{k}{a}-l}{a^k (-1)^l} : k,l\in\mathbb{N}\right\}
\]
is dense in $\mathbb{R}$ for any $a\in\mathbb{R}$ with $a>1$. From
this it should be obvious that the result above follows since the
image of a dense set in $\mathbb{R}\setminus\{0\}$ under the map
$f(x)=1/x$ is also dense in $\mathbb{R}$.

\begin{lemma} \label{densesequence} The set
\[
\left\{\frac{\frac{k}{a}-l}{a^k (-1)^l} : k,l\in\mathbb{N}\right\}
\]
is dense in $\mathbb{R}$ for any $a>1$.
\end{lemma}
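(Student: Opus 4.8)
The plan is to reduce the density statement to an elementary approximation lemma about integers of a prescribed parity. First I would clear the sign factor in the denominator: since $(-1)^l\in\{1,-1\}$ we have $\tfrac{1}{(-1)^l}=(-1)^l$, so
\[
\frac{\frac{k}{a}-l}{a^k(-1)^l}=\frac{\bigl(\frac{k}{a}-l\bigr)(-1)^l}{a^k}.
\]
Thus, given a target $t\in\mathbb{R}$ and $\epsilon>0$, it suffices to produce positive integers $k,l$ with
\[
\Bigl|\bigl(\tfrac{k}{a}-l\bigr)(-1)^l-t\,a^k\Bigr|<\epsilon\,a^k .
\]
Since $a>1$ we have $a^k\to\infty$, so it is enough to find, for all sufficiently large $k$, a positive integer $l$ for which $\bigl(\tfrac{k}{a}-l\bigr)(-1)^l$ lies within an absolute constant (I will use $2$) of $t\,a^k$; then choosing $k$ large enough that $k/a\ge 4$ and $2/a^k<\epsilon$ finishes the argument.

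The core claim is therefore: if $k/a\ge 4$, then for every real number $s$ there is an integer $l\ge 1$ with $\bigl|\bigl(\tfrac{k}{a}-l\bigr)(-1)^l-s\bigr|\le 2$. I would prove this by splitting on the size of $s$. If $s\ge\tfrac12$, note that for odd $l$ one has $\bigl(\tfrac{k}{a}-l\bigr)(-1)^l=l-\tfrac{k}{a}$, which grows linearly in $l$; choosing an odd integer $l$ within distance $1$ of $\tfrac{k}{a}+s$ (such an $l$ exists and is $\ge 1$ since $\tfrac{k}{a}+s\ge 4.5$) gives $\bigl|(l-\tfrac{k}{a})-s\bigr|\le 1$. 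Symmetrically, if $s\le-\tfrac12$, then for even $l$ one has $\bigl(\tfrac{k}{a}-l\bigr)(-1)^l=\tfrac{k}{a}-l$, and picking an even integer $l$ within distance $1$ of $\tfrac{k}{a}-s=\tfrac{k}{a}+|s|\ (\ge 4.5)$ does the job, again with $l\ge 1$. Finally, if $|s|<\tfrac12$, take $l=\lfloor k/a\rfloor\ (\ge 1)$, so that $\bigl|\bigl(\tfrac{k}{a}-l\bigr)(-1)^l\bigr|=\tfrac{k}{a}-\lfloor k/a\rfloor<1$ and hence the quantity is within $\tfrac32<2$ of $s$.

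The only point needing care is the elementary fact used twice above: within distance $1$ of any real number $r$ there is an integer of each prescribed parity. This is immediate — the consecutive integers $\lfloor r\rfloor$ and $\lceil r\rceil$ both lie within distance $1$ of $r$ and, when $r\notin\mathbb{Z}$, have opposite parities, while if $r\in\mathbb{Z}$ one of $r,r\pm1$ has the desired parity and lies within distance $1$; in our applications $r$ is large and positive, so the chosen integer is automatically $\ge 1$. Assembling the pieces: given $t$ and $\epsilon$, pick $k$ with $k/a\ge 4$ and $2/a^k<\epsilon$, apply the core claim with $s=t\,a^k$ to obtain $l\ge 1$, and divide the resulting inequality by $a^k$ to conclude that $\bigl|\tfrac{k/a-l}{a^k(-1)^l}-t\bigr|<\epsilon$. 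I do not expect a genuine obstacle here; the substance is simply the observation that forcing the parity of $l$ costs only a bounded error, which is negligible against the factor $a^k$ in the denominator — and that $a>1$ makes both $a^k\to\infty$ and $k/a\to\infty$, so every auxiliary smallness/largeness requirement is met by taking $k$ large.
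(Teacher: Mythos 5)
Your proof is correct and follows essentially the same route as the paper: both arguments scale the target by $a^k$ (with $k$ large enough that $a^{-k}$ beats $\epsilon$), use the parity of $l$ to control the sign, and approximate the scaled target by an integer of the prescribed parity, the parity constraint costing only a bounded error that is negligible after dividing by $a^k$. The only difference is cosmetic — you split on the size of $s=t\,a^k$ while the paper splits on the sign of the target $x$ — so no further comparison is needed.
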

\begin{proof}
Let $x\in\mathbb{R}$ and $\epsilon>0$ be given. We want to find $k,l\in\mathbb{N}$ such that
\[
\left|\frac{\frac{k}{a}-l}{a^k (-1)^l}-x\right|<\epsilon.
\]
There are two cases to consider, namely the cases $x>0$ and $x<0$, and we consider them separately (the case $x=0$ is
trivial since keeping $l$ fixed we can find $k$
big enough which does the job).

\medskip

\textit{Case I ($x>0$):} There exists $k\in\mathbb{N}$ such that
$1/a^k<\epsilon/2$. We will show that there exists a positive
odd integer $l=2s-1$ for some $s\in\mathbb{N}$ for which
\[
\left|\frac{\frac{k}{a}-l}{a^{k}(-1)^l}-x\right|=\left|\frac{2s}{a^k}-\frac{1}{a^k}-\frac{k}{a^{k+1}}-x\right|<\epsilon.
\]
But note that this is true since consecutive terms in the sequence $\{2s/a^{k}:s\in\mathbb{N}\}$ are at distance
$2/a^{k}<\epsilon$ units apart and so, for some $s\in\mathbb{N}$ it holds that
$\frac{2s}{a^k}-\frac{1}{a^k}-\frac{k}{a^{k+1}}\in(x-\epsilon,x+\epsilon)$.

\medskip

\textit{Case II ($x<0$):} There exists $k\in\mathbb{N}$ such that
$1/a^k<\epsilon/2$. We will show that there exists a positive
even integer $l=2s$ for some $s\in\mathbb{N}$ for which
\[
\left|\frac{\frac{k}{a}-l}{a^{k}(-1)^l}-x\right|=\left|\frac{k}{a^{k+1}}-\frac{2s}{a^{k}}-x\right|<\epsilon.
\]
But note that this is true since consecutive terms in the sequence $\{2s/a^{k}:s\in\mathbb{N}\}$ are at distance
$2/a^{k}<\epsilon$ units apart and so, for some $s\in\mathbb{N}$ it holds that
$\frac{k}{a^{k+1}}-\frac{2s}{a^{k}}\in(x-\epsilon,x+\epsilon)$.
\end{proof}

\begin{lemma} \label{unbounded}
Let $x\in\mathbb{R}\setminus\{0\}$, $a>1$ and consider sequences
$\{k_i\}$, $\{l_i\}$ of natural numbers with $k_i, l_i\to +\infty$
such that
\[
\frac{\frac{k_i}{a}-l_i}{a^{k_i}(-1)^{l_i}}\to x.
\]
Then both the numerator and denominator stay unbounded.
\end{lemma}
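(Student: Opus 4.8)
The plan is to handle the denominator and the numerator separately; the denominator is immediate, and the unboundedness of the numerator is forced by the hypothesis $x\neq 0$.

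First I would simply record that $|a^{k_i}(-1)^{l_i}|=a^{k_i}$, and since $a>1$ and $k_i\to+\infty$ we have $a^{k_i}\to+\infty$. Hence the denominator sequence is unbounded; in fact its absolute value diverges to $+\infty$. Nothing further is needed for this half.

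For the numerator the key is the elementary identity
\[
\left|\frac{k_i}{a}-l_i\right|=\left|\frac{\frac{k_i}{a}-l_i}{a^{k_i}(-1)^{l_i}}\right|\cdot a^{k_i}.
\]
By hypothesis the first factor on the right converges to $|x|>0$ (this is exactly where $x\neq 0$ is used), and by the previous step the second factor tends to $+\infty$; therefore the product tends to $+\infty$, so the numerator sequence is unbounded. One may phrase this as a contradiction instead: if $\{\frac{k_i}{a}-l_i\}$ were bounded by some $M>0$, then $\left|\frac{\frac{k_i}{a}-l_i}{a^{k_i}(-1)^{l_i}}\right|\le M/a^{k_i}\to 0$, contradicting the convergence of the quotient to the nonzero number $x$.

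I do not expect any genuine obstacle here; the statement is a brief quantitative observation rather than a substantial argument. The one point I would flag is that the assumption $x\neq 0$ is essential and must be invoked at precisely the step above: if $x$ were allowed to be $0$, the numerator need not stay unbounded, as one sees by taking $l_i$ to be the nearest integer to $k_i/a$, which keeps $|k_i/a-l_i|<1$ while the quotient still tends to $0$.
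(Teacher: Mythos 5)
Your proof is correct and is essentially the same as the paper's (the paper simply states that the denominator grows unbounded and forces the numerator to keep up); you have merely spelled out the one-line argument, including the correct observation that $x\neq 0$ is where the numerator's unboundedness comes from.
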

\begin{proof} This is trivial since the denominator grows unbounded and so it forces the numerator to keep up.
\end{proof}

\begin{remark}
The case where $x=0$ is the only one for which one has the freedom of having the denominator grow unbounded and keep the
numerator bounded. However, if one requires
both numerator and denominator to stay unbounded then the numerator can also be made to grow
unbounded (growing at a slower rate than the denominator).
\end{remark}

Let us now proceed with the construction of a locally hypercyclic pair of upper triangular non-diagonal matrices
on $\mathbb{R}^n$ which is not hypercyclic.

\begin{theorem}\label{realuppertriangular}
Let $n$ be a positive integer with $n\geq 2$ and consider the $n\times n$ matrices
\[
A_j=
\left(
\begin{array}{lllcl}
a_j & 0   & 0   & \ldots & 1\\
0   & a_j & 0   & \ldots & 0\\
0   & 0   & a_j & \ldots & 0\\
\vdots & \vdots & \vdots & \ddots & 0\\
0& 0& 0& \ldots & a_j
\end{array}
\right)
\]
for $j=1,2$ where $a_1>1$ and $a_2=-1$. Then $( A_1, A_2 )$ is a locally hypercyclic pair on $\mathbb{R}^n$
which is not hypercyclic. In particular, we have
\[\{x\in\mathbb{R}^n:J_{(A_1,A_2)}(x)=\mathbb{R}^n\}=
\{(x_1,0,\ldots,0)^t\in\mathbb{R}^n: x_1\in\mathbb{R}\}.\]
\end{theorem}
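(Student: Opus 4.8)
The plan is to mimic the structure of the proof of Theorem \ref{diagonal}, using the key sequence lemma (Lemma \ref{densesequence}) that has just been established. First I would compute the powers of the matrices $A_1$ and $A_2$. Writing $A_j = a_j I + N$ where $N$ is the nilpotent matrix with a single $1$ in the top-right corner (so $N^2 = 0$), the binomial theorem gives $A_j^m = a_j^m I + m a_j^{m-1} N$. Since $A_1$ and $A_2$ commute (both are polynomials in $N$), the product $A_1^k A_2^l$ is easily written down: on the diagonal it acts as $a_1^k a_2^l$, and the $(1,n)$-entry is obtained from $(a_1 I + N)^k (a_2 I + N)^l = a_1^k a_2^l I + (k a_1^{k-1} a_2^l + l a_1^k a_2^{l-1}) N$, i.e. the corner entry is $a_1^k a_2^l\bigl(\tfrac{k}{a_1} + \tfrac{l}{a_2}\bigr)$. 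With $a_1 = a > 1$ and $a_2 = -1$ this corner entry is $a^k(-1)^l\bigl(\tfrac{k}{a} - l\bigr)$, whose reciprocal is exactly the quantity appearing in Lemma \ref{densesequence}.

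Next I would establish that $J_{(A_1,A_2)}(x) = \mathbb{R}^n$ for $x = (1,0,\ldots,0)^t$ (equivalently $x = e_1$). Fix a target vector $y = (y_1,\ldots,y_n)^t$. The corner entry of $A_1^k A_2^l$ maps the first coordinate into the last; so if I choose sequences $k_i, l_i \to +\infty$ with $a^{k_i}(-1)^{l_i}\bigl(\tfrac{k_i}{a} - l_i\bigr) \to y_n$ — which is possible precisely because the reciprocal set in Lemma \ref{densesequence} is dense in $\mathbb{R}$, hence so is the set itself after inverting (and the $y_n = 0$ case is handled separately, or absorbed) — then $A_1^{k_i} A_2^{l_i} e_1$ has last coordinate tending to $y_n$ and all other coordinates equal to $a^{k_i}(-1)^{l_i}$, which is unbounded. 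To fix this I perturb: set
\[
x_i = \Bigl(1,\ \frac{y_2}{a_i}\bigl(\text{diag factor}\bigr)^{-1},\ \ldots\Bigr),
\]
more precisely choose $x_i = e_1 + \sum_{j=2}^{n} \frac{y_j - (\text{contribution from first coord})}{a^{k_i}(-1)^{l_i}} e_j$, noting that $a^{k_i}(-1)^{l_i} \to \pm\infty$ by Lemma \ref{unbounded}, so $x_i \to e_1$; then $A_1^{k_i} A_2^{l_i} x_i \to y$ coordinate by coordinate (the diagonal action on the $j$-th coordinate is multiplication by $a^{k_i}(-1)^{l_i}$, which cancels the denominator, while the corner contribution lands only in the last coordinate and is controlled separately). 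This shows $y \in J_{(A_1,A_2)}(e_1)$, and since $y$ was arbitrary, $J_{(A_1,A_2)}(e_1) = \mathbb{R}^n$. Then, as in Theorem \ref{diagonal}, homogeneity gives $J_{(A_1,A_2)}(\lambda e_1) = \lambda J_{(A_1,A_2)}(e_1) = \mathbb{R}^n$ for every $\lambda \neq 0$, and Lemma \ref{basiclemma2} upgrades this to $J_{(A_1,A_2)}(0) = \mathbb{R}^n$.

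For the non-hypercyclicity and the reverse inclusion in the displayed characterization, I would argue that no orbit can be dense: the matrices $A_1, A_2$ are simultaneously triangularizable (indeed already triangular) with all diagonal entries of absolute value $\geq 1$ in coordinates $2,\ldots,n$, so the restriction of the semigroup $\mathcal{F}_{(A_1,A_2)}$ to the subspace $\{x_1 = x_n = 0\}$... — more cleanly, I would invoke that a hypercyclic tuple of matrices forces something incompatible: projecting onto coordinates $2,\ldots,n$, the pair acts as a tuple of diagonal(izable) matrices on $\mathbb{R}^{n-1}$ (times a nilpotent part that only feeds coordinate $1$ into coordinate $n$), and by Feldman's result \cite[theorem 4.4]{Feldman3} a pair of diagonal matrices on $\mathbb{R}^{n-1}$ has no somewhere-dense orbit, contradicting denseness of a full orbit in $\mathbb{R}^n$. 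For the precise characterization set: if $u = (u_1,\ldots,u_n)^t$ has $u_j \neq 0$ for some $j \in \{2,\ldots,n-1\}$, then every element of $Orb((A_1,A_2),u)$ has $j$-th coordinate of absolute value $\geq |u_j|$, so $J_{(A_1,A_2)}(u)$ misses a neighborhood of $0$ in that coordinate; the coordinate $n$ requires a slightly more careful bookkeeping because it receives the corner contribution, but one checks directly that if $u_n \neq 0$ (or $u_j \ne 0$ for $j$ between $2$ and $n$) the set $J_{(A_1,A_2)}(u)$ is still a proper subset, exactly as in Theorem \ref{diagonal}.

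I expect the main obstacle to be the bookkeeping in coordinate $n$: unlike the purely diagonal case, the top-right $1$ couples the first coordinate into the last, so the perturbation vectors $x_i$ and the verification of $A_1^{k_i}A_2^{l_i} x_i \to y$ must simultaneously control the diagonal growth (handled by dividing by $a^{k_i}(-1)^{l_i}$) and the corner term $a^{k_i}(-1)^{l_i}(\tfrac{k_i}{a} - l_i)$ (handled by Lemma \ref{densesequence} together with Lemma \ref{unbounded}, which guarantees we have room to also push both $k_i$ and $l_i$ to infinity). Once the powers $A_1^k A_2^l$ are written out explicitly and the two lemmas are in hand, the rest is a routine adaptation of the diagonal argument.
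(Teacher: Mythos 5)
Your computation of $A_1^kA_2^l$ and the outer frame (homogeneity plus Lemma \ref{basiclemma2} to handle $0$) are fine, but the core of the local hypercyclicity argument goes wrong because you read the corner coupling backwards: the $1$ sits in position $(1,n)$, so it feeds the \emph{last} coordinate of the input into the \emph{first} coordinate of the output, not the first into the last (in particular $A_1^{k}A_2^{l}e_1$ has coordinates $2,\dots,n$ equal to $0$, not $a^{k}(-1)^{l}$, and ``the corner contribution lands only in the last coordinate'' is false). Your scheme then fails on two counts. First, the density you invoke is not what Lemma \ref{densesequence} provides: inverting the lemma's set makes the quotient $a^{k}(-1)^{l}/\bigl(\tfrac{k}{a}-l\bigr)$ dense, whereas the corner entry is the \emph{product} $a^{k}(-1)^{l}\bigl(\tfrac{k}{a}-l\bigr)=a^{2k}\cdot\bigl(\text{the lemma's quantity}\bigr)$, whose density is nowhere established; so ``choose $k_i,l_i$ with the corner entry tending to $y_n$'' is unjustified. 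Second, and decisively, with $x_{i1}=1$ the first output coordinate is $a^{k_i}(-1)^{l_i}x_{i1}+a^{k_i}(-1)^{l_i}\bigl(\tfrac{k_i}{a}-l_i\bigr)x_{in}$, and nothing in your construction removes the divergent term $a^{k_i}(-1)^{l_i}\cdot 1$; perturbations of size $O\bigl(1/a^{k_i}\bigr)$ in coordinates $2,\dots,n$ cannot fix this. The missing idea is the exact cancellation the paper arranges: take $x_{in}=-x/\bigl(\tfrac{k_i}{a_1}+\tfrac{l_i}{a_2}\bigr)$ (decaying like $1/(\tfrac{k_i}{a}-l_i)$, not $1/a^{k_i}$), so the corner term cancels $a_1^{k_i}a_2^{l_i}x$ in the first coordinate; choose $(k_i,l_i)$ via Lemma \ref{densesequence} so that $a_1^{k_i}a_2^{l_i}/\bigl(\tfrac{k_i}{a_1}+\tfrac{l_i}{a_2}\bigr)\to -w_n/x$ (after reducing to $w_n\neq 0$, which is harmless since $J$-sets are closed); then the last coordinate $a_1^{k_i}a_2^{l_i}x_{in}\to w_n$, the other coordinates are handled by perturbations of size $O\bigl(1/(\tfrac{k_i}{a}-l_i)\bigr)$, and Lemma \ref{unbounded} guarantees $x_i\to(x,0,\dots,0)^t$.

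Your non-hypercyclicity argument also has a hole at $n=2$: projecting onto coordinates $2,\dots,n$ gives a \emph{pair} of diagonal matrices on $\mathbb{R}^{n-1}$, and Feldman's theorem (no $m$-tuple of diagonal matrices on $\mathbb{R}^m$ has a somewhere dense orbit) only applies when $2\le n-1$; for $n=2$ the projected pair is a pair of scalars on $\mathbb{R}$, which in general can be hypercyclic (that is exactly the mechanism of Theorem \ref{diagonal}). The paper's direct argument avoids this: since $|a_2|=1$, the last coordinate of any orbit has absolute values $a_1^{k}|y_n|$, a geometric set that cannot be dense in $\mathbb{R}^{+}$ (and equals $\{0\}$ if $y_n=0$). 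With those repairs your outline matches the paper's proof, but as written the key cancellation and the correct use of Lemma \ref{densesequence} are missing.
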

\begin{proof}
It easily follows that
\[
A_1^k A_2^l=
\left(
\begin{array}{lllcl}
a_1^k a_2^l & 0 & 0       & \ldots & a_1^k a_2^l\left(\frac{k}{a_1}+\frac{l}{a_2}\right)\\
0           & a_1^k a_2^l & 0      & \ldots & 0\\
0           & 0           & a_1^k a_2^l & \ldots & 0\\
\vdots & \vdots & \vdots & \ddots & 0\\
0& 0& 0& \ldots & a_1^k a_2^l
\end{array}
\right).
\]
Let $x\neq 0$. We want to find a sequence
$x_i=(x_{i1},x_{i2},\ldots,x_{in})^t$, $i\in\mathbb{N}$ which
converges to the vector $(x,0,\ldots,0)^t$ and such that for any
vector $w=(w_1,w_2,\ldots,w_n)^t$ there exist strictly increasing
sequences $\{k_i\}$, $\{l_i\}$ of positive integers for which
$A_1^{k_i} A_2^{l_i} x_i \to w$. Without loss of generality we may
assume that $w_n\neq 0$. This is equivalent to having
\[
a_1^{k_i} a_2^{l_i} x_{i1} + a_1^{k_i} a_2^{l_i}\left(\frac{k_i}{a_1}+\frac{l_i}{a_2}\right) x_{in}\to w_1
\]
and
\[
a_1^{k_i} a_2^{l_i} x_{ij}\to w_j
\]
for $j=2,\ldots,n$. By Lemma \ref{densesequence} there exist
sequences $\{k_i\}$ and $\{l_i\}$ of positive integers such that $k_i,l_i\to +\infty$ and
\[\frac{a_1^{k_i} a_2^{l_i}}{\frac{k_i}{a_1}+\frac{l_i}{a_2}}\to - \frac{w_n}{x}.\]
We set
\[
x_{i1}=x-\frac{w_1 x}{w_n\left(\frac{k_i}{a_1}+\frac{l_i}{a_2}\right)}, \quad x_{ij}=-\frac{w_j x}
{w_n\left(\frac{k_i}{a_1}+\frac{l_i}{a_2}\right)}
\]
for $j=2,\ldots,n-1$, and
\[
x_{in}=-\frac{x}{\frac{k_i}{a_1}+\frac{l_i}{a_2}}.
\]
Note that, because of Lemma \ref{unbounded}, $x_{i1}\to x$ and
$x_{ij}\to 0$ for $j=2,\ldots,n$. Substituting into the equations
above we find
\[
a_1^{k_i} a_2^{l_i} x_{i1} +  a_1^{k_i} a_2^{l_i}\left(\frac{k_i}{a_1}+\frac{l_i}{a_2}\right) x_{in} =
a_1^{k_i} a_2^{l_i}\left(-\frac{w_1
x}{w_n\left(\frac{k_i}{a_1}+\frac{l_i}{a_2}\right)}\right)\to w_1
\]
and
\[
a_1^{k_i} a_2^{l_i} x_{ij} = a_1^{k_i} a_2^{l_i}\left(-\frac{w_j x}{w_n\left(\frac{k_i}{a_1}+\frac{l_i}{a_2}\right)}
\right) \to w_j
\]
for $j=2,\ldots, n-1$ as well as
\[
a_1^{k_i} a_2^{l_i} x_{in} = a_1^{k_i} a_2^{l_i}\left(-\frac{x}{\frac{k_i}{a_1}+\frac{l_i}{a_2}}\right)\to w_n.
\]

The pair $(A_1,A_2)$ is not hypercyclic. The reason is that
if it is hypercyclic then there is a vector $y=(y_1,y_2,\ldots,y_n)^t\in\mathbb{R}^n$ such that the set
$\{ A_1^kA_2^ly : k,l\in\mathbb{N}\cup\{0\}\}$
is dense in $\mathbb{R}^n$.
Hence the set of vectors
\[\left\{\left( \begin{array}{c}
a_1^k a_2^l y_1+ a_1^k a_2^l\left(\frac{k}{a_1}+\frac{l}{a_2}\right) y_n \\
a_1^k a_2^l y_2\\
\vdots \\
a_1^k a_2^l y_n
\end{array} \right) : k,l\in\mathbb{N}\cup\{0\}\right\}
\]
is dense in $\mathbb{R}^n$. If $y_n=0$ then it is clear that the last coordinate cannot approximate anything but
$0$. If $y_n\neq 0$ then, since $a_1>1$ and $a_2=-1$ the sequence $\{|a_1^k a_2^l y_n|:
k,l\in\mathbb{N}\cup\{0\}\}=\{|a_1|^k |y_n|:k\in\mathbb{N}\cup\{0\}\}$ is geometric and so cannot be dense in
$\mathbb{R}^+$. It is left to the reader to check that \[\{x\in\mathbb{R}^n:J_{(A_1,A_2)}(x)=\mathbb{R}^n\}=
\{(x_1,0,\ldots,0)^t\in\mathbb{R}^n: x_1\in\mathbb{R}\}.\]
\end{proof}

In what follows we establish an analogue of Theorem \ref{realuppertriangular} in the complex setting.

\begin{lemma}\label{complexdenseseq}
Let $a, \theta$ be real numbers such that $a>1$ and $\theta$ an irrational multiple of $\pi$. Then the set
\[ \left\{ \frac{  \frac{k}{ae^{i\theta }}-l}{a^ke^{ik\theta }(-1)^l}: k,l\in\mathbb{N}\right\} \]
is dense in $\mathbb{C}$.
\end{lemma}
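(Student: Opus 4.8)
The plan is to mimic the proof of Lemma \ref{densesequence}, but now controlling the modulus and the argument simultaneously in place of a single real arithmetic progression. First I would put the general term into a transparent form: with $a_1=ae^{i\theta}$ and $a_2=-1$, a one-line computation (using $1/(-1)^l=(-1)^l$ and $1/e^{ix}=e^{-ix}$) gives
\[
\frac{\frac{k}{ae^{i\theta}}-l}{a^ke^{ik\theta}(-1)^l}
=(-1)^l\left(\frac{k}{a^{k+1}}e^{-i(k+1)\theta}-\frac{l}{a^k}e^{-ik\theta}\right).
\]
Since $a>1$, we have $\frac{k}{a^{k+1}}\to 0$ as $k\to+\infty$, so this term differs from $-(-1)^l\frac{l}{a^k}e^{-ik\theta}$ by a quantity of modulus at most $\frac{k}{a^{k+1}}$. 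Hence it suffices to prove that $\left\{-(-1)^l\frac{l}{a^k}e^{-ik\theta}:k,l\in\mathbb{N}\right\}$ is dense in $\mathbb{C}$.

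Next, fix a target $z\in\mathbb{C}$ and $\epsilon>0$. The case $z=0$ is handled by fixing $l$ and letting $k\to+\infty$, so assume $z=re^{i\phi}$ with $r>0$. I would restrict attention to even exponents $l=2s$, so that $(-1)^l=1$ and the leading term becomes $-\frac{2s}{a^k}e^{-ik\theta}=\frac{2s}{a^k}e^{i(\pi-k\theta)}$, a complex number whose modulus $\frac{2s}{a^k}$ and argument $\pi-k\theta$ are independently adjustable. Since $\theta$ is an irrational multiple of $\pi$, the number $\theta/(2\pi)$ is irrational, and therefore the sequence $\{e^{ik\theta}:k\in\mathbb{N}\}$ is dense in the unit circle by Kronecker's theorem (see \cite{HardyWright}); in particular there are arbitrarily large $k$ for which $\pi-k\theta$ lies within any prescribed $\delta>0$ of $\phi$ modulo $2\pi$. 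Choosing such a $k$ large enough that $\frac{k}{a^{k+1}}<\frac{\epsilon}{3}$, $\frac{2}{a^k}<\frac{\epsilon}{3}$ and $\delta<\frac{\epsilon}{3r}$, the consecutive points of $\left\{\frac{2s}{a^k}:s\in\mathbb{N}\right\}$ are $\frac{2}{a^k}$ apart, so some $s\in\mathbb{N}$ satisfies $\left|\frac{2s}{a^k}-r\right|<\frac{2}{a^k}$ (automatically $s\geq 1$ for $k$ large). A routine triangle-inequality estimate, together with $|e^{ix}-e^{iy}|\le|x-y|$, then gives $\left|\frac{2s}{a^k}e^{i(\pi-k\theta)}-z\right|\le\frac{2}{a^k}+r\delta<\frac{2\epsilon}{3}$, and adding back the $\frac{k}{a^{k+1}}$ error produces a term of the original sequence within $\epsilon$ of $z$.

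The only delicate point — and the main, though mild, obstacle — is the bookkeeping: the argument of $e^{-ik\theta}$ must be driven close to $\phi-\pi$ \emph{while} $k$ is kept large enough that the quantization step $\frac{2}{a^k}$ and the stray term $\frac{k}{a^{k+1}}$ are negligible. Kronecker's theorem removes any genuine obstruction, since it supplies infinitely many admissible $k$, so one merely has to fix the tolerances in the correct order ($\delta$ and the lower bound on $k$ chosen in terms of $\epsilon$ and $r$). As in the real case, it is worth noting in passing that for $z\neq 0$ both the numerator and the denominator of the original fraction are necessarily unbounded along any approximating sequence, which is the form in which the lemma will be used in the subsequent construction.
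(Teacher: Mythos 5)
Your proof is correct and follows essentially the same route as the paper: isolate the dominant term $\pm\frac{l}{a^k}e^{-ik\theta}$, use the density of the irrational rotation to control the argument and the $\frac{2}{a^k}$-spacing of the multiples to control the modulus, and absorb the $\frac{k}{a^{k+1}}$ term as an error via the triangle inequality. The only difference is cosmetic bookkeeping (you take $l$ even and aim $e^{-ik\theta}$ at $-e^{i\phi}$, whereas the paper takes $l$ odd and aims at $e^{i\phi}$, reusing the estimate from Lemma \ref{densesequence}).
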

\begin{proof}
Let $w=|w|e^{i\phi} \in \mathbb{C} \setminus \{ 0\}$ and $\epsilon >0$. By the denseness of the irrational rotation on
the unit circle
and by the choice of $a$, there exists a
positive integer $k$ such that
\[ |e^{-ik\theta }-e^{i\phi}|<\frac{\epsilon}{4|w|} \quad \textrm{and} \quad \frac{k}{a^{k-1}}<\frac{\epsilon }{4} .\]
By the proof of Lemma \ref{densesequence} there exists a non-negative odd integer $l=2s-1$ for some $s\in \mathbb{N}$
such that
\[ \left|\frac{-l}{a^k(-1)^l}-|w| \right|=\left|\frac{2s}{a^k}-\frac{1}{a^k}-|w|\right|<\frac{\epsilon }{2}.\]
Using the above estimates it follows that
\begin{eqnarray*}
\left| \frac{  \frac{k}{ae^{i\theta }}-l}{a^ke^{ik\theta }(-1)^l}-|w|e^{i\phi}\right|
&\leq & \left|\frac{  \frac{k}{ae^{i\theta }}}{a^ke^{ik\theta }(-1)^l}\right|+\left|\frac{-l}{a^ke^{ik\theta }
(-1)^l}-|w|e^{i\phi}\right|\\
&\leq&\frac{k}{a^{k-1}}+\left|\frac{-l}{a^k(-1)^l}-|w|\right|+|w||e^{-ik\theta}-e^{i\phi}| \\
&<& \frac{\epsilon}{4}+\frac{\epsilon}{2}+\frac{\epsilon}{4}
=\epsilon.
\end{eqnarray*}
\end{proof}
We now construct a pair of upper-triangular non-diagonal matrices which is locally hypercyclic on $\mathbb{C}^n$
and not hypercyclic.

\begin{theorem}\label{complexuppertriangular}
Let $n$ be a positive integer with $n\geq 2$ and consider the $n\times n$ matrices
\[
A_j=
\left(
\begin{array}{lllcl}
a_j & 0   & 0   & \ldots & 1\\
0   & a_j & 0   & \ldots & 0\\
0   & 0   & a_j & \ldots & 0\\
\vdots & \vdots & \vdots & \ddots & 0\\
0& 0& 0& \ldots & a_j
\end{array}
\right)
\]
for $j=1,2$ where $a_1=a e^{i\theta}$ for $a>1$, $\theta$ an irrational multiple of $\pi$ and $a_2=-1$. Then $(
A_1, A_2 )$ is a locally hypercyclic pair on $\mathbb{C}^n$ which is not hypercyclic. In particular, we have
\[\{z\in\mathbb{C}^n:J_{(A_1,A_2)}(z)=\mathbb{C}^n\}=\{(z_1,0,\ldots,0)^t\in\mathbb{C}^n: z_1\in\mathbb{C}\}.\]
\end{theorem}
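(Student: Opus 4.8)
The plan is to run the proof of Theorem \ref{realuppertriangular} essentially verbatim, replacing the real density input (Lemma \ref{densesequence}) by its complex counterpart (Lemma \ref{complexdenseseq}). First I would record the shape of the iterates: writing $A_j = a_j I + E$, where $E$ is the nilpotent matrix whose only nonzero entry is a $1$ in position $(1,n)$, one has $E^2 = 0$ (since $n \geq 2$), hence $A_j^k = a_j^k I + k a_j^{k-1} E$ and
\[ A_1^k A_2^l = a_1^k a_2^l\, I + a_1^k a_2^l\Bigl(\tfrac{k}{a_1} + \tfrac{l}{a_2}\Bigr) E, \]
the same formula as in the real case. With $a_1 = a e^{i\theta}$ and $a_2 = -1$ we have $a_1^k a_2^l = a^k e^{ik\theta}(-1)^l$ and $\tfrac{k}{a_1} + \tfrac{l}{a_2} = \tfrac{k}{ae^{i\theta}} - l$, so Lemma \ref{complexdenseseq} asserts precisely that the set $\bigl\{ (\tfrac{k}{a_1} + \tfrac{l}{a_2})/(a_1^k a_2^l) : k,l\in\mathbb{N}\bigr\}$ is dense in $\mathbb{C}$. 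This set omits $0$, because $\tfrac{k}{a_1} + \tfrac{l}{a_2} = 0$ would force $k e^{-i\theta} = al \in\mathbb{R}$, impossible as $\theta$ is an irrational multiple of $\pi$. Being a dense subset of $\mathbb{C}\setminus\{0\}$, its image under $z\mapsto 1/z$ --- that is, $\bigl\{ a_1^k a_2^l/(\tfrac{k}{a_1} + \tfrac{l}{a_2}) \bigr\}$ --- is again dense in $\mathbb{C}$.

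Next I would fix $x\in\mathbb{C}\setminus\{0\}$, set $\xi = (x,0,\ldots,0)^t$, and show $J_{(A_1,A_2)}(\xi) = \mathbb{C}^n$. Since $J_{(A_1,A_2)}(\xi)$ is closed (Lemma \ref{basiclemma3}) and the vectors $w=(w_1,\ldots,w_n)^t$ with $w_n\neq 0$ are dense, it suffices to place each such $w$ in $J_{(A_1,A_2)}(\xi)$. Using the density just established, choose $k_i,l_i\to+\infty$ with $a_1^{k_i}a_2^{l_i}/(\tfrac{k_i}{a_1}+\tfrac{l_i}{a_2}) \to -w_n/x$; as in Lemma \ref{unbounded}, since $|a_1^{k_i}a_2^{l_i}| = a^{k_i}\to\infty$ we get $|\tfrac{k_i}{a_1}+\tfrac{l_i}{a_2}|\to\infty$. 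Then define $x_i = (x_{i1},\ldots,x_{in})^t$ by the formulas of the real proof,
\[ x_{i1} = x - \frac{w_1 x}{w_n\bigl(\tfrac{k_i}{a_1}+\tfrac{l_i}{a_2}\bigr)}, \qquad x_{ij} = -\frac{w_j x}{w_n\bigl(\tfrac{k_i}{a_1}+\tfrac{l_i}{a_2}\bigr)}\ (2\leq j\leq n-1), \qquad x_{in} = -\frac{x}{\tfrac{k_i}{a_1}+\tfrac{l_i}{a_2}}, \]
so that $x_i\to\xi$, and the same one-line substitution into the matrix product above (now carried out over $\mathbb{C}$) gives $A_1^{k_i}A_2^{l_i}x_i \to w$. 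From $J_{(A_1,A_2)}(\lambda\xi) = \lambda J_{(A_1,A_2)}(\xi) = \mathbb{C}^n$ for $\lambda\neq 0$ together with Lemma \ref{basiclemma2} one also obtains $J_{(A_1,A_2)}(0) = \mathbb{C}^n$.

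To finish I would verify the two remaining assertions. For the converse inclusion in the displayed description: if $u=(u_1,\ldots,u_n)^t$ has $u_j\neq 0$ for some $j\in\{2,\ldots,n\}$, then any $y\in J_{(A_1,A_2)}(u)$ is a limit $A_1^{k_m}A_2^{l_m}x_m\to y$ with $x_m\to u$, whose $j$-th coordinate $a_1^{k_m}a_2^{l_m}x_{m,j}$ has modulus $a^{k_m}|x_{m,j}|$; as $|x_{m,j}|\to|u_j|>0$ this forces $\{k_m\}$ bounded, so the $j$-th coordinate of every element of $J_{(A_1,A_2)}(u)$ lies on one of the countably many circles $|z| = a^k|u_j|$, $k\geq 0$, and hence $J_{(A_1,A_2)}(u)\neq\mathbb{C}^n$. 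For non-hypercyclicity: a hypercyclic vector $y$ would have its orbit's last coordinates $a_1^k a_2^l y_n$ dense in $\mathbb{C}$, which is impossible, since their moduli are $a^k|y_n|$, a geometric (hence nowhere dense) set when $y_n\neq 0$, and identically $0$ when $y_n = 0$.

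The only genuinely new ingredient is Lemma \ref{complexdenseseq}, which is already proved; granting it, the rest is the real argument read over $\mathbb{C}$. The point deserving a little care --- and the closest thing to an obstacle --- is ensuring the indices may be taken with $k_i,l_i\to+\infty$ while the ratio $a_1^{k_i}a_2^{l_i}/(\tfrac{k_i}{a_1}+\tfrac{l_i}{a_2})$ converges to the prescribed nonzero value, so that the corrective terms defining $x_i$ vanish. This is automatic because the proof of Lemma \ref{complexdenseseq} already produces $k\to\infty$ (from $k/a^{k-1}\to 0$), which forces $l\to\infty$ and, via $|a_1^{k}a_2^{l}| = a^{k}\to\infty$, also $|\tfrac{k}{a_1}+\tfrac{l}{a_2}|\to\infty$, exactly the content of Lemma \ref{unbounded}.
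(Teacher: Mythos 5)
Your proposal is correct and follows exactly the route the paper intends: the paper's own proof of this theorem consists of the single remark that it runs along the same lines as Theorem \ref{realuppertriangular} with Lemma \ref{complexdenseseq} replacing Lemma \ref{densesequence}, which is precisely what you carry out (and you supply the minor details --- nonvanishing of $\frac{k}{a_1}+\frac{l}{a_2}$, unboundedness of the indices --- that the paper leaves implicit).
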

\begin{proof}
The proof follows along the same lines as the proof of Theorem \ref{realuppertriangular} where use is
made of Lemma \ref{complexdenseseq} instead of Lemma \ref{densesequence}.
\end{proof}

\begin{remark}
Note that for $n=2$ the upper triangular matrices we obtain in Theorems \ref{realuppertriangular} and
\ref{complexuppertriangular} are in Jordan form. This gives an example of a locally hypercyclic pair of matrices
in Jordan form which is not hypercyclic.
\end{remark}

\section{Concluding remarks and questions}
We stress that all the tuples considered in this work consist of commuting matrices/operators. Recently, in
\cite{Java1} Javaheri deals with the non-commutative case. In particular, he shows that for every positive
integer $n\geq 2$ there exist non-commuting linear maps $A, B:\mathbb{R}^n\to \mathbb{R}^n$  so that for every
vector $x=(x_1, x_2,\ldots , x_n)$ with $x_1\neq 0$ the set $$\{ B^{k_1}A^{l_1}\ldots B^{k_n}A^{l_n}x:
k_j,l_j\in \mathbb{N}\cup \{ 0\} , 1\leq j\leq n\} $$ is dense in $\mathbb{R}^n$. In other words the $2n$-tuple
$(B,A, \ldots ,B,A)$ is hypercyclic.

The following open question was kindly posed by the referee.

\smallskip

\noindent \textbf{Question.} Suppose $(T_1, T_2, \ldots ,T_m)$ is a locally hypercyclic tuple of (commuting)
matrices such that $J_{(T_1, T_2, \ldots ,T_m)}(x)=\mathbb{R}^n$ for a finite set of vectors $x$ in
$\mathbb{R}^n$ whose linear span is equal to $\mathbb{R}^n$. Is it true that the tuple $(T_1, T_2, \ldots ,T_m)$
is hypercyclic? Similarly for $\mathbb{C}^n$.

\smallskip

\noindent \textbf{Acknowledgements.} We would like to thank the referee for an extremely careful reading of the
manuscript. Her/his remarks and comments helped us to improve considerably the presentation of the paper.
Finally we mention that Theorem 4.1 is due to the referee.

\end{document}